\tikzset{main node/.style={circle,fill=blue!20,draw,minimum size=1cm,inner sep=0pt},
            }
\newtheorem{teo}{Theorem}[section]
\newtheorem{defi}[teo]{Definition}
\newtheorem{prop}[teo]{Proposition}
\newtheorem{exm}[teo]{Example}
\newtheorem{cor}[teo]{Corollary}
\newtheorem{lem}[teo]{Lemma}
\newtheorem{rem}[teo]{Remark}
\newcommand{\mathset}[1]{{\left\{#1\right\}}}
\newcommand{\absolute}[1]{\left\lvert#1\right\rvert}
\DeclareMathOperator{\Span}{Span}
\DeclareMathOperator{\Vol}{Vol}
\DeclareMathOperator{\Spec}{Spec}
\DeclareMathOperator{\diag}{diag}
\DeclareMathOperator{\supp}{supp}
\title{A non-autonomous $p$-Adic diffusion equation on time changing graphs}
\author{Patrick Erik Bradley and \'Angel Mor\'an Ledezma}
\date{\today}
\begin{document}

\maketitle

\begin{abstract}
Motivated by the recently proven presence of ultrametricity in physical models (certain spin glasses) and the very recent study of Turing patterns on locally ultrametric state spaces, first non-autonomous diffusion operators on such spaces, where finitely many compact $p$-adic spaces are joined by a graph structure, are studied, including their Dirichlet and van Neumann eigenvalues. Secondly, the Cauchy problem for the heat equation associated with these operators is solved, its solution approximated by Trotter products, and thirdly, the corresponding Feller property as well as the Markov property (a Hunt process) is established. 
\end{abstract}

%%%%%%%%%%%%%%5
\section{Introduction}

The Gibbs measure associated with certain mean-field spin glass models, including the Sherrington-Kirkpatrick model,  was conjectured to  have ultrametric support in the thermodynamic limit
\cite{MPSTV1984}. This conjecture, known as the \emph{Parisi ultrametricity conjecture}, was proved by Panchenko  under the condition that the so-called \emph{Ghirlanda-Guerra identities} are satisfied \cite{Panchenko2013}.
This conjecture (and now result) has motivated up to this day the study of random processes on ultrametric state spaces, the most studied type being  diffusion on the $p$-adic numbers, cf.\ e.g.\ \cite{Mystkowski1994,Karwowski2007} and many other research articles in the physical and mathematical literature.
\newline

Ultrametricity of a space means that it has a hierarchical structure, i.e.\ its points can be viewed as the boundary of a tree, and therefore its topology, as well as any physical process on it, are determined by this tree structure. The $p$-adic numbers form a convenient mathematical framework for studying ultrametric spaces by viewing those trees as part of the regular tree structure of the $p$-adic numbers---as long as the  vertex degrees do not exceed the bound given by the $p$-adic regularity.
However, if the state space is only locally ultrametric, then it seemed unclear how to mathematically model diffusion on such structures until Z\'u\~niga-Galindo's idea of modelling interactions between those local ultrametric pieces appeared via an externally imposed graph structure on this ensemble of pieces which then can be viewed as disjoint compact open subsets of the $p$-adic number field $\mathbb{Q}_p$ \cite{ZunigaNetworks}. He even argues that $p$-adic analysis is a natural tool for studying Turing patterns on mean-field models.
In his case, these local pieces were modelled as $p$-adic balls, and thus he was lead to define a $p$-adic Laplacian integral operator resembling the integral operator representation of the well-studied Vladimirov (or Vladimirov-Taibleson) operator, cf.\ \cite{Taibleson1975,VVZ1994}.
\newline

The study of non-autonomous diffusion on $p$-adic domains seems to have been given significantly less consideration in the literature.
This motivates the authors of this article to investigate diffusion processes on locally hierarchical structures whose interactions (given as weighted edges) are time-dependent. The vertices of the underlying time-dependent graph are fixed in this setting. After clarifying the construction of $p$-adic operators associated with finite graphs---the ``dictionary'' set up in \cite{brad_wave_p} needs a correction---properties of a time-dependent version of the Z\'u\~niga operators from \cite{ZunigaNetworks} are studied, proving that the uniform continuity of the kernel function  implies the strong continuity of the new time-dependent operator (Theorem \ref{stronglyContinuous}) which is necessary for proving the Feller property later. 
Dirichlet and van Neumann eigenvalues  are calculated (Theorem \ref{boundaryEV}), whereby the corresponding boundary conditions are generalised from graph theory. Thereafter,
the non-autonomous Cauchy problem for  the heat equation associated with these operators is solved (Theorem \ref{CauchyProblem}), thereby approximating the solution using Trotter products, and then finding that the temporal discretisation error is in first order $O\left(\frac{1}{n}e^{\frac{1}{n}}\right)$ (Corollary \ref{error}). In the end the Feller property (Theorem \ref{contractionSG}) and corresponding Markov property---a Hunt process---is established (Theorem \ref{Markov}).

%%%%%%%%%%%%%
\section{Non-autonomous $p$-adic equations for time-changing graphs}

%%%%%%%%%%%%%%%%% 
\subsection{Preliminaries}

In order to fix notation, denote the field of $p$-adic numbers as $\mathbb{Q}_p$. The usual $p$-adic absolute value on $\mathbb{Q}_p$ is written as $\absolute{\cdot}_p$. The additive group of $\mathbb{Q}_p$ being a locally compact abelian group means that it has a Haar measure, denoted as $dx$ (if the current variable is $x$). It is invariant under translation by adding any fixed element of $\mathbb{Q}_p$, used for integrating functions, and is normalised here by setting
\[
\int_{\mathbb{Z}_p}dx=1
\]
where $\mathbb{Z}_p$ is the subring of $\mathbb{Q}_p$ consisting of elements with $p$-adic absolute value at most $1$, i.e.\ $\mathbb{Z}_p$ is the $p$-adic unit ball.
\newline

    %$\mathcal{M}$
    For each $t\in [0,\infty)$, we take a matrix $A(t)\in \mathbb{C}^{n\times n}$. If for every $t\in [0,\infty)$ we assume that $A(t)$ is a symmetric matrix, then we can view this matrix as an adjacency matrix of a weighted graph $\mathcal{G}_t$ with $n$ nodes in a natural way. For every $t$, the vertex set of $\mathcal{G}_t$ can be embedded into $\mathbb{Q}_p$. The fact that $\mathcal{G}_t$ has an unchanging set of nodes allows to fix an embedding for all $t>0$. In other words the only difference between $\mathcal{G}_t$ and $\mathcal{G}_s$ for $s\neq t$ are the edge weights. The set of vertices being constant, allows to write: $V(\mathcal{G}):=V(\mathcal{G}_t)$, for any $t>0$. 
Following \cite{ZunigaNetworks},    
    we fix the embedding 
$$V(\mathcal{G})\rightarrow\mathbb{Q}_p,$$
and fix $N$ such that each vertex $I\in V(G)$ lies in a distinct $p$-adic ball of radius $r=p^{-N}$
\[
B_r(I)=\{x\in \mathbb{Q}_p: |x-I|\leq r\}
\]
sufficiently small such that all these balls are disjoint in $\mathbb{Q}_p$. Let 
\[
K_N =\bigsqcup_{I\in V(\mathcal{G})}B_{p^{-N}}(I). 
\]
Each vertex of $\mathcal{G}$ can be assumed to have the form 
\[
I=I_0+I_1 p+...+I_{N-1}p^{N-1},
\]
where each $I_i\in\{0,\dots,p-1\}$ for $i=0,1,2,\dots$ is a $p$-adic digit.

%%%%%%%%%%%%%%%%%
\subsection{Basis of $L^2(K_N,\mathbb{C})$}

We are interested in the Hilbert space $L^2(K_N,\mathbb{C})$ with inner product  
$$\langle f, g \rangle =\int_{K_N} f(x) \ \overline{g(x)}\,  dx$$
Let $\Omega(p^r|x-a|_p)$ denote the indicator function of the ball $B_{-r}(a)$. If $r=N$ and $a=I\in V(G)$, we write 
\[
\Omega(p^N|x-I|_p)=:\Omega_{I,N}(x)
\]
The space of functions $X_N=\Span_{\mathbb{C}}\{\Omega_{I,N}(x)\}\cong \mathbb{C}^n$ is an $n$-domensional vector space and it will be denoted by $X_N$. 

\begin{defi}
The functions 
$$\Psi_{j,n,r}(x)=p^{\frac{r}{2}}\chi_p(p^{-r-1}jx)\Omega(p^{r}|x-p^{-r}n|_p)$$
where $r\in \mathbb{Z}$, $j\in \{1,...,p-1\}$, and $n$ runs through a fixed set of representatives of $\mathbb{Q}_p / \mathbb{Z}_p$ are called \textbf{Kozyrev functions}.  
ion. If $Supp (\Psi_{j,n,r})=B_{-r}(p^{-r}n) \subset B_{-N}(I)$, then $|p^{-r}n|_p=|I|_p$, with $N\leq r$. 
\end{defi}
\textit{Proof}. We have that $p^{-r}\leq p^{-N}$, and 
$$|p^{-r}n-I|_p<p^{-N}\implies |p^{-N-r}n-p^{-N}I|_p<1,$$
where $p^{-N}I\in \mathbb{Q}_p/\mathbb{Z}_p$, and therefore $|p^{-N}I|_p>1$. Hence, if $p^{-N-r}n\neq p^{-N}I$, we have
\[
1>|p^{-N-r}n-p^{-N}I|_p= \max\{|p^{-N-r}n|_p,|p^{-N}I|_p\}>1.
\]
This shows that we must have $|p^{-N-r}n|_p=|p^{-N}I|_p$, implying the assertion. \qed \newline

Therefore an orthonormal basis of 
\[
\mathcal{L}_0^2(K_N)=\mathset{f\in L^2(K_N)\mid\int_{K_N}f(x)\,dx=0}
\]
is the set of functions of the form 
\begin{equation}\label{KozyrevWavelet}
    \Psi_{j,I,r}(x)=p^{\frac{r}{2}}\chi_p(p^{-r-1}jx)\Omega(p^{r}|x-I|_p),
\end{equation} with $r\geq N$, $j=0,...p-1$ and $I\in V(\mathcal{G})$.

\begin{prop}
We have $L^2(K_N,\mathbb{C})=X_N \oplus \mathcal{L}_0^2(K_N)$.
\end{prop}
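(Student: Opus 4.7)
The plan is to use the Hilbert space orthogonal decomposition theorem. The key observation is that $X_N$ is finite-dimensional with orthogonal basis $\{\Omega_{I,N}\}_{I \in V(\mathcal{G})}$ (the basis elements are pairwise orthogonal because the balls $B_{-N}(I)$ are pairwise disjoint), hence closed in $L^2(K_N,\mathbb{C})$. Standard Hilbert space theory then yields $L^2(K_N,\mathbb{C}) = X_N \oplus X_N^\perp$ as an orthogonal direct sum, so the task reduces to identifying $X_N^\perp$ with $\mathcal{L}_0^2(K_N)$.

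For this identification, observe that $f \perp X_N$ is equivalent to $\int_{B_{-N}(I)} f\,dx = \langle f, \Omega_{I,N}\rangle = 0$ for every $I \in V(\mathcal{G})$; summing over $I$ and using that $K_N$ is the disjoint union of the balls gives $\int_{K_N} f\,dx = 0$, proving $X_N^\perp \subseteq \mathcal{L}_0^2(K_N)$. The reverse inclusion follows from the orthonormal Kozyrev basis displayed just above the statement: each $\Psi_{j,I,r}$ with $r \geq N$ is supported on a single ball $B_{-N}(I)$ and integrates to zero there, hence lies in $X_N^\perp$, so their closed linear span (which is $\mathcal{L}_0^2(K_N)$) is contained in $X_N^\perp$.

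Equivalently one may exhibit the decomposition explicitly: for $f \in L^2(K_N,\mathbb{C})$, set
\[
f_1 = \sum_{I \in V(\mathcal{G})} \Big( p^N \int_{B_{-N}(I)} f(x)\,dx \Big)\, \Omega_{I,N} \in X_N, \qquad f_2 = f - f_1,
\]
and check that $f_2$ has vanishing mean over every ball, in particular over $K_N$. Uniqueness of the splitting reduces to $X_N \cap \mathcal{L}_0^2(K_N) = \{0\}$, which is immediate in the per-ball reading: an element $g = \sum_I c_I \Omega_{I,N}$ with $\int_{B_{-N}(I)} g\,dx = 0$ for each $I$ forces every $c_I = 0$. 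I anticipate no serious obstacle. The only delicacy is notational: the set-builder definition of $\mathcal{L}_0^2(K_N)$ in the text literally demands only vanishing of the total integral over $K_N$, while the Kozyrev-basis description and the direct-sum claim actually require the stronger per-ball vanishing. I would therefore read $\mathcal{L}_0^2(K_N)$ as $X_N^\perp$ throughout, which is plainly the intended meaning.
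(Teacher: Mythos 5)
Your proof is correct, but it is different in character from the paper's: the paper offers no argument at all, merely a citation of \cite[(10.8)]{ZunigaNetworks} together with the orthogonality relation $\langle \Omega_{I,N},\Psi_{j,J,r}\rangle=0$, whereas you supply the actual Hilbert-space reasoning --- $X_N$ is finite-dimensional and hence closed, so $L^2(K_N,\mathbb{C})=X_N\oplus X_N^{\perp}$, and the explicit projection $f_1=\sum_{I}\bigl(p^N\int_{B_{p^{-N}}(I)}f(x)\,dx\bigr)\Omega_{I,N}$ realizes the splitting; the identification of $X_N^{\perp}$ then rests on the same two ingredients the paper's citation hides, namely orthogonality of wavelets to ball indicators and Kozyrev's completeness theorem. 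What your route buys is that it surfaces a genuine imprecision which the citation-only proof glosses over: your closing ``notational delicacy'' is not pedantry. With the set-builder definition literally printed in the paper (vanishing of the \emph{total} integral over $K_N$ only), the sum is not direct once $|V(\mathcal{G})|\geq 2$, since e.g.\ $\Omega_{I,N}-\Omega_{J,N}$ is a nonzero element of $X_N\cap\mathcal{L}_0^2(K_N)$; moreover the Kozyrev functions supported in $K_N$ then fail to span $\mathcal{L}_0^2(K_N)$, which would have codimension $1$ rather than codimension $n$. Your resolution --- reading $\mathcal{L}_0^2(K_N)$ as the per-ball mean-zero space, i.e.\ as $X_N^{\perp}$ --- is exactly what the decomposition in Z\'u\~niga's paper asserts and what the rest of this paper (the spectral description of $\mathbb{L}$ and the Fourier expansions of solutions) actually uses, so your proof is the one the statement needs.
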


\textit{Proof.} 
This is shown in \cite[(10.8)]{ZunigaNetworks}, where orthogonality uses the well-known result:
%Let $f\in L^2(K_N,\mathbb{C})$. We define
%$$f_N=p^N\sum_{I\in V(\mathcal{G})}\langle f, \Omega_{I,N}\rangle \Omega_{I,N}(x).$$
%It is easy to see that $\int_{K_N} (f-f_N)dx=0$. Hence we can write the function $f-f_N$ by its Fourier expansion on $\mathcal{L}_0^2(K_N)$:
%$$f(x)-f_N(x)=\sum_{j,I,r}C_{j,I,r}\Psi_{j,I,r}(x).$$
%On the other hand, an easy computation shows that 
\begin{equation}\label{orthogonality}
    \langle \Omega_{I,N},\Psi_{j,J,r}\rangle=0,
\end{equation}
for every $r\geq N$, $j=0,...,p-1$ and $I,J\in V(\mathcal{G})$, cf.\ e.g.\ \cite[Thm.\ 3.9]{XKZ2018} or \cite[Thm.\ 9.4.2]{AXS2010}. \qed \newline

By the above proposition, the set of functions consisting of the functions $p^{-\frac{N}{2}}\Omega_{I,N}(x)$ and the Kozyrev functions of the form (\ref{KozyrevWavelet}) form an orthonormal basis of $L^2(K_N,\mathbb{C})$. 

%%%%%%%%%%%%%
\subsection{Two operators over $L^2(K_N,\mathbb{C})$}
For every matrix $A\in \mathbb{C}^{n\times n}$ we can define a bounded linear operator in the following way. We define 
\[
A(x,y)=p^{N}\sum_{I,J\in V(\mathcal{G})}A_{I,J}\Omega_{I,N}(x)\Omega_{J,N}(y). 
\]
This function is a test function on $K_N \times K_N$, and hence, $A(x,y)\in L^2(K_N\times K_N, \mathbb{C})$. This function gives rise to an integral operator in the usual way: 
\begin{equation}\label{integralOp}
    \mathcal{A} u(x)=\int_{K_N} A(x,y)u(y)dy.
\end{equation}
This operator is a bounded linear operator from $L^2(K_N,\mathbb{C})$ to itself. 
\begin{prop}\label{kernelAdjacency}
Let $A\in \mathbb{C}^{n\times n}$ and $\mathcal{A}$ its integral operator of the form (\ref{integralOp}). Then $X_N^{\perp}\subset Ker(\mathcal{A})$, where $X_N^{\perp}$ is the orthogonal complement of $X_N$.
\end{prop}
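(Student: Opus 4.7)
The plan is to exploit the previous proposition, which identifies $X_N^\perp$ with $\mathcal{L}_0^2(K_N)$, together with the fact that this orthogonal complement admits the Kozyrev functions $\Psi_{j,I,r}$ ($r\geq N$, $j=1,\dots,p-1$, $I\in V(\mathcal{G})$) as an orthonormal basis. Since $\mathcal{A}$ is bounded (hence continuous), it suffices to verify that $\mathcal{A}\Psi_{j,I,r}=0$ for every such basis element; the conclusion on all of $X_N^\perp$ then follows by linearity and continuity.

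First I would substitute the explicit kernel into (\ref{integralOp}) and interchange the (finite) sum with the integral, obtaining
\[
\mathcal{A}\Psi_{j,I,r}(x)=p^N\sum_{I',J'\in V(\mathcal{G})}A_{I',J'}\,\Omega_{I',N}(x)\,\langle \Psi_{j,I,r},\Omega_{J',N}\rangle.
\]
Then I would invoke the orthogonality relation (\ref{orthogonality}), which gives $\langle \Omega_{J',N},\Psi_{j,I,r}\rangle=0$ for every $r\geq N$, every admissible $j$, and every $J'\in V(\mathcal{G})$. Each term in the double sum therefore vanishes, and we conclude $\mathcal{A}\Psi_{j,I,r}=0$.

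Finally, given an arbitrary $f\in X_N^\perp=\mathcal{L}_0^2(K_N)$, expand $f=\sum_{j,I,r}c_{j,I,r}\Psi_{j,I,r}$ in the orthonormal basis. By boundedness of $\mathcal{A}$ the image of the partial sums converges to $\mathcal{A}f$ in $L^2$, but each partial sum is annihilated by the previous step, so $\mathcal{A}f=0$.

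The only delicate point worth double-checking is the interchange of sum and integral in the first step and the appeal to continuity in the last step, but since the defining sum for $A(x,y)$ is \emph{finite} and $\mathcal{A}$ is bounded on $L^2(K_N,\mathbb{C})$, neither presents a genuine obstacle; the argument is essentially a direct computation built on top of (\ref{orthogonality}).
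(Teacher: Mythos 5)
Your proof is correct, but it takes a genuinely different route from the paper's. The paper argues in two lines directly from the definition of the orthogonal complement: expanding the kernel gives
\[
\mathcal{A}u(x)=p^N\sum_{I,J\in V(\mathcal{G})} A_{I,J}\langle u,\Omega_{J,N}\rangle\,\Omega_{I,N}(x),
\]
so $\mathcal{A}u$ depends only on the inner products $\langle u,\Omega_{J,N}\rangle$; since each $\Omega_{J,N}$ lies in $X_N$, these vanish tautologically for $u\in X_N^{\perp}$, and $\mathcal{A}u=0$. No basis of $X_N^\perp$ is needed. You instead identify $X_N^{\perp}$ with $\mathcal{L}_0^2(K_N)$ via the orthogonal decomposition, annihilate each Kozyrev wavelet using (\ref{orthogonality}), and extend by linearity and boundedness of $\mathcal{A}$. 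This is valid, but it invokes two heavier inputs the paper's argument avoids: the completeness of the Kozyrev system in $\mathcal{L}_0^2(K_N)$ (a nontrivial theorem) and a density-plus-continuity limit argument. It also inverts the paper's logical order: the paper derives the statement about Kozyrev functions (its Corollary) as a consequence of this proposition, whereas you prove the Kozyrev case first and bootstrap from it. What your route buys is that the corollary comes for free along the way; what the paper's route buys is minimal hypotheses and a proof that would survive even if one knew nothing about wavelet bases on $K_N$.
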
  
\textit{Proof}. Note that 
$$\mathcal{A}u(x)=p^N\sum_{I,J\in V(\mathcal{G})} A_{I,J}\langle u,\Omega_{J,N}\rangle\Omega_{I,N}(x)\in X_N.$$
If $u\in X_N^{\perp}$, then $\langle u, \Omega_{I,N}\rangle=0$, therefore by the above equality $\mathcal{A}u=0$.   \qed 
\newline
\begin{cor}
%\label{kernelAdjacency} 
For every matrix $A\in \mathbb{C}^{n\times n}$, and every $u\in \mathcal{L}_0^2(K_N)$, $\mathcal{A}u=0$. In particular the set Kozyrev functions of the form (\ref{KozyrevWavelet}) are contained in $Ker (\mathcal{A})$. 
\end{cor}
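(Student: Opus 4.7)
The plan is very short because almost all the work is already done by the preceding proposition and the orthogonal decomposition of $L^2(K_N,\mathbb{C})$. First I would recall that the proposition $L^2(K_N,\mathbb{C}) = X_N \oplus \mathcal{L}_0^2(K_N)$ is an orthogonal direct sum decomposition (the inner product makes $\mathcal{L}_0^2(K_N)$ into the space of functions with zero mean, while $X_N$ is spanned by the constant-on-each-ball indicators $\Omega_{I,N}$). Consequently $\mathcal{L}_0^2(K_N) = X_N^{\perp}$.

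Given this identification, the first assertion of the corollary is an immediate consequence of Proposition \ref{kernelAdjacency}: for any $u \in \mathcal{L}_0^2(K_N) = X_N^{\perp}$, one has $u \in \mathrm{Ker}(\mathcal{A})$, i.e.\ $\mathcal{A}u = 0$.

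For the second assertion, I would invoke the orthogonality relation (\ref{orthogonality}): every Kozyrev function $\Psi_{j,J,r}$ with $r \geq N$ satisfies $\langle \Omega_{I,N}, \Psi_{j,J,r} \rangle = 0$ for all $I \in V(\mathcal{G})$. Since $\{\Omega_{I,N}\}_{I\in V(\mathcal{G})}$ spans $X_N$, this shows $\Psi_{j,J,r} \in X_N^{\perp} = \mathcal{L}_0^2(K_N)$, and the first part then gives $\mathcal{A}\Psi_{j,J,r} = 0$.

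There is essentially no obstacle here: the corollary is a direct packaging of Proposition \ref{kernelAdjacency} together with the orthogonal decomposition and the known orthogonality (\ref{orthogonality}). The only point worth stating cleanly in the write-up is the identification $\mathcal{L}_0^2(K_N) = X_N^{\perp}$, which follows from the preceding proposition, and noting that this identification turns the kernel statement of Proposition \ref{kernelAdjacency} into the mean-zero statement claimed here.
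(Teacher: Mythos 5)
Your proposal is correct and follows essentially the same route as the paper's own (very terse) proof: both reduce the statement to Proposition \ref{kernelAdjacency} combined with the orthogonality relation (\ref{orthogonality}), your write-up merely making explicit the identification $\mathcal{L}_0^2(K_N)=X_N^{\perp}$ that the orthogonal decomposition $L^2(K_N,\mathbb{C})=X_N\oplus\mathcal{L}_0^2(K_N)$ provides. No gap; your version is simply a fleshed-out form of the paper's one-line argument.
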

\textit{Proof.} By Proposition \ref{kernelAdjacency} and equality (\ref{orthogonality}), the result follows. \qed
\newline

Recall that the \textit{Laplacian matrix} $L$ is defined as 
$$L=A-(\gamma_{I}\delta_{IJ})$$
where $A$ is the adjacency matrix of a graph $\mathcal{G}$, $\gamma_I=\sum_{J}A_{I,J}$ is the \textit{degree} of the vertex $I$, and $\delta_{I,J}$ is the identity matrix.  
We can associate to $L$ an operator of the form (\ref{integralOp}) denoted by $\mathcal{L}$. By Corollary \ref{kernelAdjacency},
the operator $\mathcal{L}$ only "acts" on the space $X_N$, that is the operator $\mathcal{L}$
defined
similarly as in (\ref{integralOp}):
\[
\mathcal{L}u(x)=\int_{K_N}L(x,y)u(y)\,dy
\]
with
\[
L(x,y)=p^N\sum\limits_{I,J\in V(\mathcal{G})}(A_{IJ}-\gamma_I\delta_{IJ})\Omega_{I,N}(x)\Omega_{J,N}(x)
\]
is just a rewrite of the Laplacian matrix in a space of dimension $n$, namely $X_N\cong \mathbb{C}^n$ and does not give an outcome of functions in $\mathcal{L}_0^2(K_N)$. In particular, the Kozyrev functions of the form (\ref{KozyrevWavelet}) are not associated to any non zero eigenvalue of the operator $\mathcal{L}$. Therefore the operator $\mathcal{L}$ is not an adequate candidate for the $p$-adic Laplacian, because it gives the same information as the Laplacian matrix, but not more. 

\begin{rem}\label{wrongStatement}
This is important in order to define an analogous operator to the Laplacian operator in the $p$-adic case. In \cite[Lemma 2.7]{brad_wave_p}  it is stated that the operator $\mathcal{L}$ coincides with the operator defined by the integral
$$ \mathbb{L}u(x)=\int_{K_N}A(x,y) (u(y)-u(x))dy$$
in the space of test functions, but this is not true as the following computations shows: 
Note that
\begin{align*}
\mathbb{L}u(x)
&=\int_{K_N}A(x,y) (u(y)-u(x))dy
\\
&=p^{N}\sum_{I,J}A_{I,J}\langle u,\Omega_{J,N}\rangle\Omega_{I,N}(x)-u(x)\gamma(x),
\end{align*}
where
\[
\gamma(x)=\int_{K_N}A(x,y)dy=\sum_{I,J}A_{I,J}\Omega_{I,N}(x).
\]
Then let $f=\Omega(p^{r}|x-a|_p)$ be the indicator function of a ball $B_{-r}(a)\subset B_{-N}(J_0)$, for some $J_0\in V(\mathcal{G})$. Then 
\begin{equation*}
    \begin{split}
        \mathcal{L}f(x)&=p^{N}\sum_{I,J}L_{I,J}\langle f, \Omega_{J,N}\rangle \Omega_{I,N}(x)\\
        &=p^{N}\sum_{I,J}L_{I,J}p^{-r}\delta_{J_0,J}\Omega_{I,N}(x)\\
        &=p^{N-r}\sum_{I}L_{I,J_0}\Omega_{I,N}(x)\\
        &=p^{N-r}\sum_{I}A_{I,J_0}\Omega_{I,N}(x)-p^{N-r}\gamma_{J_0}\Omega_{J_0,N}(x),
    \end{split}
\end{equation*}
On the other hand, we have 
\begin{equation*}
    \begin{split}
        \mathbb{L}f(x)&=\int_{K_N}A(x,y) (f(y)-f(x))dy
        \\
        &=p^{N}\sum_{I,J}A_{I,J}\langle f(y),\Omega_{J,N}(y)\rangle\Omega_{I,N}(x)-f(x)\gamma(x),\\
        &=p^{N}\sum_{I,J}A_{I,J}p^{-r}\delta_{J_0,J}\Omega_{I,N}(x)-f(x)\gamma(x)\\
        &=p^{N-r}\sum_{I}A_{I,J_0}\Omega_{I,N}(x)-\Omega(p^{r}|x-a|_p)\gamma(x),
    \end{split}
\end{equation*}
If $B_{-r}(a)\subset B_{-N}(J_0)$ is a proper inclusion, then we can take an $x_0\in B_{-N}(J_0) \setminus B_{-r}(a)$. Then
\[
\mathcal{L}f(x_0)=-p^{N-r}\gamma_{J_0}, 
\]
and
\[
\mathbb{L}f(x_0)=0.
\]
Here we supposed that $A_{I,I}=0$ for all $I$, i.e.\ that $\mathcal{G}$ is a simple graph. It is clear that, if $r=N$ and $a=I$, then equality holds, that is, the two operators coincide in the space $X_N$. Moreover, since  $\mathcal{L}$ and $\mathcal{A}$ (the operator associated to the adjacency matrix of $\mathcal{G}$) are zero for all $u\in X_{N}^{\perp}$, we have for such $u$ that
\[
\mathcal{L}u(x)=0, 
\]
and
$$\mathbb{L}u(x)=\mathcal{A}u(x)-u(x)\gamma(x)=-u(x)\gamma(x).$$
There is another fundamental difference between these two operators, as the following results shows. 
\end{rem}

\begin{prop}
The operator $\mathcal{L}$ is compact.
\end{prop}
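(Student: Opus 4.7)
The plan is to observe that $\mathcal{L}$ is of finite rank and then invoke the standard fact that every finite-rank operator on a Hilbert space is compact.

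First I would note that the construction of $\mathcal{L}$ is exactly the integral operator associated to a matrix (the Laplacian matrix $L$) via the recipe (\ref{integralOp}), so Proposition \ref{kernelAdjacency} applies verbatim with $A$ replaced by $L$. This yields $X_N^{\perp}\subset \ker(\mathcal{L})$. Combined with the explicit formula
\[
\mathcal{L}u(x)=p^N\sum_{I,J\in V(\mathcal{G})} L_{I,J}\langle u,\Omega_{J,N}\rangle\,\Omega_{I,N}(x),
\]
which shows $\mathrm{Range}(\mathcal{L})\subset X_N$, we conclude that $\mathcal{L}$ factors through the finite-dimensional space $X_N\cong\mathbb{C}^n$. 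In particular $\mathcal{L}$ has rank at most $n$.

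Then I would cite the elementary result that any bounded operator of finite rank on a Hilbert space is compact (the image of the unit ball is a bounded subset of a finite-dimensional space, hence relatively compact). This finishes the proof. As a cross-check, one could alternatively note that the kernel $L(x,y)$ is a finite linear combination of products of indicator functions of compact sets, hence lies in $L^2(K_N\times K_N,\mathbb{C})$, so $\mathcal{L}$ is Hilbert--Schmidt and in particular compact; both routes are very short.

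There is no real obstacle here: the substantive work was already done in Proposition \ref{kernelAdjacency}, which pins down the range of $\mathcal{L}$ inside the finite-dimensional subspace $X_N$. The only care needed is to make explicit that the same proof for $\mathcal{A}$ applies to $\mathcal{L}$, since $L$ is merely another complex $n\times n$ matrix. This also contrasts $\mathcal{L}$ sharply with the operator $\mathbb{L}$ from Remark \ref{wrongStatement}, whose action on $X_N^{\perp}$ is multiplication by $-\gamma(x)$ and therefore is in general \emph{not} compact.
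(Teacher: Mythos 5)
Your proposal is correct, but your primary route differs from the paper's. The paper argues that $\mathcal{L}$ is a Hilbert--Schmidt operator, justifying this by the observation that $\mathcal{L}$ vanishes on all Kozyrev functions supported in $K_N$ (so that the Hilbert--Schmidt norm, computed in the orthonormal basis of indicators plus Kozyrev wavelets, reduces to a finite sum), and then invokes the standard fact that Hilbert--Schmidt operators are compact. You instead argue via finite rank: the explicit formula shows $\mathrm{Range}(\mathcal{L})\subset X_N\cong\mathbb{C}^n$, so $\mathcal{L}$ is a bounded operator of rank at most $n$, hence compact. Both arguments rest on the same structural fact --- that $\mathcal{L}$ effectively lives on the finite-dimensional subspace $X_N$, which is the content of Proposition \ref{kernelAdjacency} applied to the matrix $L$ --- but your finite-rank argument is the more elementary one: it needs neither the wavelet basis nor any Hilbert--Schmidt theory, and compactness of finite-rank operators is immediate. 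Your cross-check (the kernel $L(x,y)$ is a finite combination of products of indicator functions, hence lies in $L^2(K_N\times K_N)$, so the operator is Hilbert--Schmidt) is in substance the paper's route, and is arguably a cleaner way to see the Hilbert--Schmidt property than the paper's phrasing in terms of Kozyrev functions. Your closing contrast with $\mathbb{L}$, whose restriction to $X_N^{\perp}$ is multiplication by $-\gamma(x)$ and which is therefore not compact, is also consistent with what the paper establishes in Proposition \ref{degreeEV} and Corollary \ref{notCompact}.
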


\textit{Proof}. It is clear that $\mathcal{L}$ is a Hilbert-Schmidt operator, since it is zero for all the Kozyrev functions supported in $K_N$. It is known that all Hilbert-Schmidt operators are compact. \qed

\begin{prop}\label{degreeEV}
The operator $\mathbb{L}:L^{2}(K_N,\mathbb{C})\rightarrow L^{2}(K_N,\mathbb{C})$ defined by
\[
\mathbb{L}u(x)=\mathcal{A}u(x)-u(x)\gamma(x),
\]
where 
\[
\gamma(x)=\int_{K_N}A(x,y)dy=\sum_{I\in V(\mathcal{G})}\gamma_I \Omega_{I,N}(x),
\]
is a bounded linear operator, and all the Kozyrev functions of the form (\ref{KozyrevWavelet}) are eigenfunctions.
\end{prop}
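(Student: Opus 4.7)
The plan is to split the statement into two claims: (i) boundedness, and (ii) the Kozyrev functions are eigenfunctions, and to exploit what we already know about $\mathcal{A}$ and the explicit form of $\gamma$.

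For boundedness, I would first recall that $\mathcal{A}$ has already been noted to be a bounded linear operator on $L^2(K_N,\mathbb{C})$ (its kernel $A(x,y)$ lies in $L^2(K_N\times K_N,\mathbb{C})$). Then I would argue that the multiplication operator $M_\gamma: u\mapsto u\gamma$ is bounded, because
\[
\gamma(x)=\sum_{I\in V(\mathcal{G})}\gamma_I\Omega_{I,N}(x)
\]
is a simple step function, hence $\gamma\in L^{\infty}(K_N,\mathbb{C})$ with $\|\gamma\|_\infty=\max_I|\gamma_I|$. The multiplication operator is therefore bounded by $\max_I|\gamma_I|$, and $\mathbb{L}=\mathcal{A}-M_\gamma$ is bounded as a difference of bounded operators. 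Linearity is immediate.

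For the eigenfunction claim, I would take a Kozyrev function $\Psi_{j,I,r}$ with $r\geq N$ and $I\in V(\mathcal{G})$, supported in $B_{-r}(I)\subseteq B_{-N}(I)$ by the lemma preceding \eqref{KozyrevWavelet}. Since $\Psi_{j,I,r}\in\mathcal{L}_0^2(K_N)$, the corollary following Proposition \ref{kernelAdjacency} gives $\mathcal{A}\Psi_{j,I,r}=0$, so
\[
\mathbb{L}\Psi_{j,I,r}(x)=-\gamma(x)\Psi_{j,I,r}(x).
\]
On the support $B_{-r}(I)\subseteq B_{-N}(I)$, we have $\Omega_{J,N}(x)=\delta_{I,J}$, hence $\gamma(x)\equiv\gamma_I$ there; off the support the product vanishes anyway. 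Therefore
\[
\mathbb{L}\Psi_{j,I,r}(x)=-\gamma_I\,\Psi_{j,I,r}(x),
\]
showing that $\Psi_{j,I,r}$ is an eigenfunction with eigenvalue $-\gamma_I$.

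There is no real obstacle here; the only subtlety worth flagging is the support argument that collapses $\gamma(x)$ to the constant $\gamma_I$ on the support of $\Psi_{j,I,r}$, which hinges precisely on the already-proved containment $B_{-r}(p^{-r}n)\subset B_{-N}(I)$ that determines which $B_{-N}(I)$ contains the support. Once this is observed, the computation drops out, and the eigenvalues $-\gamma_I$ appear naturally indexed by the vertex to whose ball the Kozyrev function is attached.
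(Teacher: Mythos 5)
Your proof is correct, but it takes a genuinely different route from the paper's: the paper disposes of both claims purely by citation to Z\'u\~niga-Galindo's work (Lemma 10.1 of \cite{ZunigaNetworks} for boundedness and Theorem 10.1 of \cite{ZunigaNetworks} for the eigenfunction property), whereas you give a self-contained argument. Your decomposition $\mathbb{L}=\mathcal{A}-M_\gamma$, with $\mathcal{A}$ bounded because its kernel lies in $L^2(K_N\times K_N,\mathbb{C})$ and $M_\gamma$ bounded because $\gamma$ is an $L^\infty$ step function with $\|\gamma\|_\infty=\max_I|\gamma_I|$, is sound; and your eigenfunction computation---$\mathcal{A}\Psi_{j,I,r}=0$ by the corollary to Proposition \ref{kernelAdjacency}, then $\gamma\equiv\gamma_I$ on the support $B_{p^{-r}}(I)\subseteq B_{p^{-N}}(I)$ because the vertex balls are disjoint, and the product vanishes off the support---correctly yields $\mathbb{L}\Psi_{j,I,r}=-\gamma_I\Psi_{j,I,r}$, in agreement with the eigenvalues listed in Theorem \ref{somethingKnown}. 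Your route has a concrete advantage here: the lemma the paper cites for boundedness actually asserts compactness of $\mathbb{L}$, a claim this very paper refutes (Corollary \ref{notCompact}), so only the boundedness content of that citation survives scrutiny; your direct argument sidesteps that delicate reliance entirely, and as a bonus exhibits the eigenvalues $-\gamma_I$ explicitly rather than leaving them implicit in the cited theorem. What the paper's approach buys in exchange is brevity and a pointer to where the (corrected) spectral picture is worked out in full.
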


\textit{Proof.} 
In \cite[Lem.\ 10.1]{ZunigaNetworks} it is shown that $\mathbb{L}$ is compact. The eigenfunction property and corresponding eigenvalues of the Kozyrev wavelets are shown in \cite[Thm.\ 10.1]{ZunigaNetworks}.
Notice that the claim in \emph{loc.\ cit}.\ about
$\mathbb{L}$ being compact is false, cf.\ Corollary \ref{notCompact} below.
%Indeed, we know that $\mathcal{A}$ is a linear bounded operator since the kernel $A(x,y)\in L^2(K_N\times K_N)$. Also, the operator $\Gamma u(x)=u(x)\gamma(x)$ is a bounded linear operator: 
%$$||\Gamma u||_{L^2}^2=\int_{K_N}|\gamma(x)u(x)|^2dx\leq \sum_{I\in V(\mathcal{G})}|\gamma_I|^2\int_{K_N}|u(x)|^2dx= \sum_{I\in V(\mathcal{G})}|\gamma_I|^2 ||u||_{L^2}^2.$$
%Thus the operator $\mathbb{L}$ is a bounded linear  operator since it is a sum of bounded linear operators. Now, take a Kozyrev function $\Psi_{j,I,r}$ of the form (\ref{KozyrevWavelet}). By Proposition \ref{kernelAdjacency}, we have
%$$\mathbb{L}\Psi_{j,I,r}(x)=-\Psi_{j,I,r}(x)\gamma(x)=-\gamma_I\Psi_{j,I,r}(x).$$
%where $Supp(\Psi_{j,I,r})(x)\subset B_{-N}(I)$. 
\qed

\begin{cor}\label{notCompact}
$\mathbb{L}$ is not compact.
\end{cor}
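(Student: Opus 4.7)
The plan is to contradict compactness by exhibiting an infinite orthonormal family of eigenfunctions of $\mathbb{L}$ all sharing a single nonzero eigenvalue. Recall that a compact operator on a Hilbert space has only finite-dimensional eigenspaces for each nonzero eigenvalue; equivalently, it cannot send an orthonormal sequence to one with a strictly positive lower bound on pairwise distances.

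First, under the (implicit and harmless) hypothesis that $\mathcal{G}$ carries at least one edge, I would pick a vertex $I_0\in V(\mathcal{G})$ with nonzero degree $\gamma_{I_0}$. By the corollary immediately following Proposition \ref{kernelAdjacency}, $\mathcal{A}$ annihilates every Kozyrev function, so on $\mathcal{L}_0^2(K_N)$ the operator $\mathbb{L}$ reduces to multiplication by $-\gamma(x)$. Since each Kozyrev wavelet $\Psi_{j,I_0,r}$ of the form (\ref{KozyrevWavelet}) is supported inside $B_{-N}(I_0)$, on which $\gamma\equiv \gamma_{I_0}$, one concludes
\[
\mathbb{L}\Psi_{j,I_0,r} = -\gamma_{I_0}\,\Psi_{j,I_0,r}.
\]

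Second, I would observe that the set of Kozyrev wavelets supported in $B_{-N}(I_0)$ is infinite: letting $r$ range over the integers $r\geq N$, $j$ over $\{1,\dots,p-1\}$, and the position parameter over its admissible set inside $B_{-N}(I_0)$ yields a countably infinite orthonormal family $\{u_k\}_{k\in\mathbb{N}}\subset L^2(K_N,\mathbb{C})$, all lying in the eigenspace of $\mathbb{L}$ associated with the single nonzero eigenvalue $-\gamma_{I_0}$. A direct computation then gives
\[
\|\mathbb{L}u_k - \mathbb{L}u_\ell\|_2 = |\gamma_{I_0}|\,\|u_k-u_\ell\|_2 = |\gamma_{I_0}|\sqrt{2}
\]
for $k\neq \ell$, so the bounded sequence $\{u_k\}$ is mapped by $\mathbb{L}$ to a sequence with no Cauchy subsequence, which is incompatible with compactness.

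The only subtle point is the nontriviality assumption on $\mathcal{G}$ needed to secure $\gamma_{I_0}\neq 0$; once that is in place the argument is entirely structural, resting only on the vanishing of $\mathcal{A}$ on $X_N^\perp$ and on the fact that $\gamma$ is locally constant on each ball $B_{-N}(I)$. I do not anticipate any real obstacle beyond carefully identifying an infinite orthonormal family of Kozyrev wavelets inside a single ball $B_{-N}(I_0)$, which is immediate from the indexing of (\ref{KozyrevWavelet}).
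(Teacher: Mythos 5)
Your proof is correct and follows essentially the same route as the paper: the paper's one-line proof deduces non-compactness from Proposition \ref{degreeEV}, i.e.\ precisely from the fact that the infinitely many Kozyrev wavelets supported in a single ball $B_{p^{-N}}(I_0)$ are eigenfunctions of $\mathbb{L}$ sharing the nonzero eigenvalue $-\gamma_{I_0}$, which forces an infinite-dimensional eigenspace for a nonzero eigenvalue. You merely make explicit what the paper leaves implicit, including the harmless hypothesis $\gamma_{I_0}\neq 0$ (without which $\mathbb{L}=0$ would in fact be compact).
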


\begin{proof}
This is an immediate consequence of Proposition \ref{degreeEV}.
\end{proof}

Notice that the non-compactness of $\mathbb{L}$ was also observed in
\cite[Ex.\ 1]{brad_HeatMumf}.
\newline

\begin{defi}
The operator $\mathcal{A}$ is called the \emph{$p$-adic Z\'u\~niga operator} associated with the finite graph $\mathcal{G}$, and
$\mathbb{L}$  is called the corresponding \emph{$p$-adic Z\'u\~niga Laplacian}. 
\end{defi}

%%%%%%%%%%%%%%%%%%%5
\subsection{Non-autonomous $p$-adic Z\'u\~niga operators}\label{nonAutoEq}
For a time-changing graph $\mathcal{G}_t$ we define its $p$-adic Laplacian as the function $t\mapsto \mathbb{L}(t)$, where
\[
\mathbb{L}(t)=\mathcal{A}(t)-\Gamma(t)\]
where
\[
\mathcal{A}(t)u(x)=\int_{K_N}A(x,y,t)u(y)dy,
\]
and 
\[
\Gamma(t)u(x)=u(x)\gamma(t)(x),
\]
where the kernel function $A(x,y,t)$ and the degree function $\gamma(t)(x)$ are given by 
\[
A(x,y,t)=p^{N}\sum_{I,J\in V(\mathcal{G})} A_{IJ}(t)\Omega_{I,N}(x)\Omega_{J,N}(y),
\]
and
\[
\gamma(t)(x)=\sum_{I\in V(\mathcal{G})}\gamma_I(t)\Omega_{I,N}(x)
\]
with 
$\gamma_{I}(t)=\sum_{J\in V(\mathcal{G})}A_{IJ}(t).$

\begin{defi}
The operators $\mathcal{A}(t)$ and $\mathbb{L}(t)$ are called \emph{non-autonomous $p$-adic Z\'u\~niga operators}. Operator $\mathbb{L}(t)$ is also called \emph{non-autonomous $p$-adic Z\'u\~niga Laplacian}. They are associated with the time-changing graph $\mathcal{G}_t$.
\end{defi}

\begin{teo}\label{stronglyContinuous}
If $t\mapsto A_{I,J}(t)$ is uniformly continuous for each $I,J\in V(\mathcal{G})$ then the $p$-adic Laplacian $t\mapsto \mathbb{L}(t)$ of $\mathcal{G}_t$  is strongly continuous, that is $t\mapsto \mathbb{L}(t)u$ is a continuous function from $[0,\infty)$ to $L^2(K_N,\mathbb{C})$ for every $u\in L^2(K_N,\mathbb{C})$.
\end{teo}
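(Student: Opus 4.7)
The plan is to split $\mathbb{L}(t)=\mathcal{A}(t)-\Gamma(t)$ and verify strong continuity of each summand separately at an arbitrary $t_0\geq 0$. The key point is that both $\mathcal{A}(t)$ and $\Gamma(t)$ depend on $t$ only through the finitely many scalar functions $A_{I,J}(t)$, so strong continuity reduces to a finite-dimensional scalar continuity statement together with two elementary norm bounds.

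For the integral part, I would first expand
\[
\mathcal{A}(t)u(x)=p^{N}\sum_{I,J\in V(\mathcal{G})}A_{I,J}(t)\,\langle u,\Omega_{J,N}\rangle\,\Omega_{I,N}(x),
\]
subtract the analogous expression at $t_0$, and apply the triangle inequality. Using that $\|\Omega_{I,N}\|=p^{-N/2}$ (hence $|\langle u,\Omega_{J,N}\rangle|\leq p^{-N/2}\|u\|$) and that the sum contains only $n^{2}$ terms with $n=|V(\mathcal{G})|$, one obtains an estimate of the shape
\[
\|\mathcal{A}(t)u-\mathcal{A}(t_0)u\|_{L^{2}}\leq C_{n,N}\,\max_{I,J}|A_{I,J}(t)-A_{I,J}(t_0)|\,\|u\|,
\]
which goes to $0$ as $t\to t_0$ by continuity of the $A_{I,J}$.

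For the multiplication operator, I would exploit that the indicators $\Omega_{I,N}$ have pairwise disjoint supports to write
\[
\|\Gamma(t)u-\Gamma(t_0)u\|_{L^{2}}^{2}=\sum_{I\in V(\mathcal{G})}|\gamma_{I}(t)-\gamma_{I}(t_0)|^{2}\int_{B_{p^{-N}}(I)}|u(x)|^{2}\,dx,
\]
then bound each integral by $\|u\|^{2}$ and use $|\gamma_{I}(t)-\gamma_{I}(t_0)|\leq\sum_{J}|A_{I,J}(t)-A_{I,J}(t_0)|$, so that continuity of the $A_{I,J}$ once more gives the conclusion. Combining these two estimates with the triangle inequality yields strong continuity of $t\mapsto\mathbb{L}(t)u$.

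There is no genuine obstacle here; the argument is essentially bookkeeping. The $p$-adic structure enters only through orthogonality and disjoint supports of the $\Omega_{I,N}$, and the finiteness of $V(\mathcal{G})$ turns every sum into a finite sum, so no uniformity across the number of vertices is required. In fact, pointwise continuity of each $A_{I,J}$ already suffices for the stated conclusion; the \emph{uniform} continuity hypothesis is presumably retained with later arguments in mind (Trotter-product approximation on bounded time intervals and the subsequent Feller/Hunt statements).
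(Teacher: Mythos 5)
Your proposal is correct and follows essentially the same route as the paper: split $\mathbb{L}(t)=\mathcal{A}(t)-\Gamma(t)$ and bound each piece by $\max_{I,J}|A_{I,J}(t)-A_{I,J}(s)|$ times $\|u\|_{L^2}$ (the paper estimates the kernel and applies H\"older's inequality where you expand against the $\Omega_{I,N}$, a purely cosmetic difference). Your closing observation is also accurate: pointwise continuity of the $A_{I,J}$ already gives the stated conclusion, while the uniform-continuity hypothesis is what yields the continuity of $t\mapsto\mathbb{L}(t)$ in the uniform operator topology that the paper records after the proof and uses later.
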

\textit{Proof.} Let $u\in L^2(K_N,\mathbb{C})$  then
\begin{equation*}
    ||\mathcal{A}(t)u-\mathcal{A}(s)u||_{L^2}^2=\int_{K_N}\left| \int_{K_N}(A(x,y,t)-A(x,y,s))u(y)dy\right|^2dx.
\end{equation*}
For $\varepsilon>0$, there is a positive real number $\delta>0$ such that 
\[
|t-s|<\delta \implies|A_{IJ}(t)-A_{IJ}(s)|<\frac{\varepsilon}{n^2}.
\]
Hence
\begin{equation*}
    \begin{split}
        |A(x,y,t)-A(x,y,s)|&\leq\sum_{IJ}|A_{IJ}(t)-A_{IJ}(t)|\\
        &\leq \varepsilon.
    \end{split}
\end{equation*}
Therefore
\begin{equation*}
    \begin{split}
        ||\mathcal{A}(t)u-\mathcal{A}(s)u||_{L^2}^2&\leq\int_{K_N} \left(\int_{K_N}|A(x,y,t)-A(x,y,s)||u(y)|dy\right)^2dx\\
        &\leq \varepsilon^2 \int_{K_N}\left(\int_{K_N}|u(y)|dy\right)^2dx\\
        &\leq \varepsilon^2\Vol(K_N)||u||_{L^2}^2,
    \end{split}
\end{equation*}
where the last inequality is due to Hölder's inequality. 
On the other hand, 
\begin{align*}
|\gamma(t)(x)-\gamma(s)(x)|&=\left|\sum_{I\in
V(\mathcal{I})}(\gamma_I(t)-\gamma_I(s))\Omega_{I,N}(x)\right|
\\
&\leq\sum_{I}\left|\sum_{J}A_{IJ}(t)-A_{IJ}(s)\right|
\\
&\leq \varepsilon
\end{align*}
Hence, 
\begin{equation*}
    \begin{split}
        ||\Gamma(t)u-\Gamma(s) u||_{L^2}^2&\leq \int_{K_N}|\gamma(t)(x)-\gamma(t)(x)|^2|u(y)|^2dx\\
        &\leq \varepsilon^2 ||u||_{L^2}^2
    \end{split}
\end{equation*}
Finally, we have 
\begin{align*}
||\mathbb{L}(t)u-\mathbb{L}(s)u||_{L^2}
&\leq ||\mathcal{A}(t)u-\mathcal{A}(s)u||_{L^2}+||\Gamma(t)u-\Gamma(s) u||_{L^2} 
\\
&\leq \varepsilon||u||_{L^2}\left(\Vol(K_N)^\frac12+1\right)  
\end{align*}
 This ends the proof. \qed \newline
 
In particular the proof of Theorem \ref{stronglyContinuous} shows that $t\mapsto \mathbb{L}(t)$ is continuous in the uniform operator topology. 
\newline

For each $t>0$, $\mathbb{L}(t)$ is a generator of a contraction semigroup $\{e^{s\mathbb{L}(t)}\}$, associated to the Cauchy problem of the heat equation: 
\begin{equation}\label{CPHE}
    \begin{cases}
      \frac{\partial u}{\partial s} (s,x)=\mathcal{L}(t)u(s,x)\\
      u(0,x)=u_0(x)\in L^2(K_n,\mathbb{C})
    \end{cases}\,.
\end{equation}
with $t\geq 0$. 
Now we present the initial value problem called the \textit{evolution system} or the \textit{non autonomous abstract Cauchy problem}
\begin{equation}\label{nAACP}
    \begin{cases}
      \frac{\partial u}{\partial t} (t,x)=\mathbb{L}(t)u(t,x)\\
      u(s,x)=u_0(x)\in L^2(K_n,\mathbb{C})
    \end{cases}\,.
\end{equation}
In the next section we present some results of the theory of semigroups and non autonomous differential equations. 

%%%%%%%%%%%%%%%%%%%%%
\subsection{Dirichlet and von Neumann eigenvalues}

Assume that the kernel function $A(x,y,t)$ is symmetric in $(x,y)$, and here also that it is an analytic function for $t\ge0$. Define for fixed $t\ge0$ the $p$-adic edge set of $\mathcal{G}(t)$ as:
\[
\Omega_E(t)=\mathset{(x,y)\in K_N^2\mid A(x,y,t)\neq 0},
\]
as well as the $p$-adic vertex boundary:
\[
\delta_NS(t)=
\mathset{x\in K_N\setminus S\mid\exists y\in S\colon A(x,y,t)\neq0}
\]
as well as the $p$-adic edge boundary of $\mathcal{G}(t)$:
\[
\partial_NS(t)=\mathset{(x,y)\in\Omega_E(t)\mid x\in S\wedge y\notin S}
\]
which could also be called the vertex and edge boundaries of the operator $\mathcal{A}(t)$ defined in Section \ref{nonAutoEq}. Further, define the compact open set
\[
\Omega_S^*(t)=\left(\Omega_E(t)\cap S\times S\right)\cup
\partial_NS(t)\subset\mathbb{Q}_p^2
\]
Now, let $S\subset K_N$ be a compact open subset. Then define:
\[
V(S):=\mathset{I\in V\mid B_{p^{-N}}(I)\cap S\neq\emptyset}.
\]
The space of functions satisfying the \emph{Dirichlet boundary condition} for $S$ is
\[
D_S^*(t)=
\mathset{f\in L^2(K_N)\mid \text{$f(x)=0$ for all $x\in\delta_NS(t)$}},
\]
and the space of functions satisfying the \emph{von Neumann boundary condition} for $S$ is
\begin{align*}
N_S^*(t)=&\\
&\hspace*{-10mm}\mathset{
f\in L^2(K_N)\mid\forall x\in\delta_NS(t)\colon
\int_{\mathset{(x,y)\in \partial_NS(t)}}A(x,y,t)(f(x)-f(y))\,dy=0
},
\end{align*}
again with fixed $t\ge0$.
\newline

In view of \cite{divgrad_p}, the von Neumann boundary condition can be formulated as
\[
D_{\partial S(t),+}f(x)=0
\]
where 
\[
D_{\partial S(t),+}u(x)=
\int_{\mathset{(x,y)\in\partial_NS(t)}}
A(x,y,t)(f(x)-f(y))\,dy
\]
is an advection operator defined similarly as $D_+^\alpha$ from \cite[\S 3]{divgrad_p}, and imitating the normal derivative w.r.t.\ the boundary of $S$ in the classical case of Riemann manifolds.
\newline

The \emph{first Dirichlet eigenvalue} of $\mathbb{L}(t)$ (with respect to $S$) is defined as:
\[
\lambda_{1,\mathbb{L}}^{(D)}(t)
=\inf\limits_{f\neq 0\atop f\in D_S^*(t)}
\frac{p^N\int_{\Omega_S^*(t)}A(x,y,t)\absolute{f(x)-f(y)}^2\,dx\,dy}
{\int_S\absolute{f(x)}^2\gamma(x)(t)\,dx}
\]
The \emph{von Neumann eigenvalue} of $\mathbb{L}(t)$ with respect to $S$ is defined as:
\[
\lambda_{S,\mathbb{L}}^{(N)}(t)
=\inf\limits_{f\neq 0\atop\int_Sf(x)\gamma(x)\,dx=0}
\frac{p^N\int_{\Omega_S^*(t)}A(x,y,t)\absolute{f(x)-f(y)}^2\,dx\,dy}
{\int_S\absolute{f(x)}^2\gamma(x)(t)\,dx}
\]
where it is assumed that $\delta_NS(t)\neq\emptyset$ for this instance of $t\ge0$.
\newline

The Dirichlet and von Neumann eigenvalues of $\mathcal{G}(t)$ w.r.t.\ $S$ will be denoted as
\[
\lambda_{1,\mathcal{G}}^{(D)}(t)\quad\text{and}\quad
\lambda_{S,\mathcal{G}}^{(N)}(t),
\]
respectively.

\begin{lem}\label{vnbc}
Let $\psi\colon S\to\mathbb{C}$ be a Kozyrev wavelet. Then $\psi\in N_S^*(t)$, i.e.\ $\psi$ satisfies the von Neumann boundary condition.
\end{lem}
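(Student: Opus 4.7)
The plan is to reduce the verification to the zero-mean property of Kozyrev wavelets. Recall from the claim preceding equation (\ref{KozyrevWavelet}) that $\psi=\Psi_{j,I,r}$ (with $r\geq N$, $j\in\{1,\ldots,p-1\}$) is supported in a ball $B_{-r}(I)$ that sits inside a single vertex ball $B_{-N}(J)$, and one has $\int_{K_N}\psi(y)\,dy=0$ because $\chi_p(p^{-r-1}j\,\cdot)$ is a nontrivial additive character on $B_{-r}(I)$. Combined with the structure of
\[
A(x,y,t)=p^N\sum_{I',J'}A_{I'J'}(t)\Omega_{I',N}(x)\Omega_{J',N}(y),
\]
which makes $y\mapsto A(x,y,t)$ constant on each vertex ball $B_{-N}(J')$ with value $p^N A_{I(x),J'}(t)$, these are the only two ingredients needed.

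First I would observe that, since $\psi$ is a Kozyrev wavelet whose support lies in $S$, its extension by zero to $K_N$ vanishes on $\delta_NS(t)\subset K_N\setminus S$, so $\psi(x)=0$ for every boundary point $x\in\delta_NS(t)$. The von Neumann integral therefore reduces to
\[
-\int_{\{y\,:\,(x,y)\in\partial_NS(t)\}} A(x,y,t)\,\psi(y)\,dy,
\]
where (interpreting the boundary edge set symmetrically, as is natural since $A$ is symmetric) the integration set collects the $y\in S$ with $A(x,y,t)\neq 0$.

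Next I would split this integral according to the vertex-ball decomposition of $K_N$. Only the ball $B_{-N}(J)$ containing $\supp\psi$ contributes, since $\psi$ vanishes elsewhere. By local constancy of the kernel, the condition $A(x,y,t)\neq 0$ either holds for every $y\in B_{-N}(J)$ or for none of them. In the second case the integral is trivially zero; in the first case one factors out the constant $p^N A_{I(x),J}(t)$ and is left with
\[
p^N A_{I(x),J}(t)\int_{B_{-N}(J)}\psi(y)\,dy=0
\]
by the zero-mean property. This yields $\psi\in N_S^*(t)$.

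The main obstacle I anticipate is purely notational: one has to unwind the definitions of $\delta_NS(t)$ and $\partial_NS(t)$ carefully to see that the integration domain for each fixed $x$ is indeed a union of pieces that respects the vertex-ball structure of $A$. Once this geometric decomposition is in place, the argument reduces to a one-line application of the two structural facts above, and no deeper analytic tools are required.
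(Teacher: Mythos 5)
Your proof is correct and follows essentially the same route as the paper's: split the von Neumann integral into the $\psi(x)$ term, which vanishes because $x\in\delta_NS(t)\subset K_N\setminus S$ lies outside the support of $\psi$, and the $\psi(y)$ term, which vanishes by the zero-mean property of Kozyrev wavelets. The only difference is that you make explicit the local constancy of $y\mapsto A(x,y,t)$ on vertex balls (needed to factor the kernel out of the second integral) and the symmetric reading of $\partial_NS(t)$, both of which the paper's proof uses implicitly.
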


\begin{proof}
Assume that the Kozyrev wavelet $\psi$ is supported in a ball $B\subset B_{p^{-N}(I)}$ for some vertex $I\in V(S)$. Then
\begin{align*}
\int_{(x,y)\in\partial_NS(t)}&A(x,y,t)(\psi(x)-\psi(y))\,dy
\\
&=\int_{(x,y)\in\partial_NS(t)}A(x,y,t)\,dy\,\psi(x)
\\
&-\int_{(x,y)\in\partial_NS(t)}A(x,y,t)\psi(y)\,dy
\\
&=0-0=0
\end{align*}
where the left term vanishes, because $x\in\delta_NS(t)$, and
the right integral vanishes, because $\psi$ is a Kozyrev wavelet, and   from Kozyrev's theorem that
\[
\int_{K_N}\psi(x)\,dx=0,
\]
cf.\ e.g.\ \cite[Thm.\ 3.29]{XKZ2018} or \cite[Thm.\ 9.4.2]{AXS2010}.
\end{proof}

\begin{teo}\label{boundaryEV}
Let $S\subseteq K_N$ be compact open, and assume that
$\delta S(t)$ is non-empty for $t\in [0,a)$ with $a>0$. Then the following statements hold true:
\begin{enumerate}
\item $\lambda_{1,\mathbb{L}}^{(D)}(t)\le\min\mathset{\lambda_{1,\mathcal{G}}^{(D)}(t),\hat{\gamma}_S(t)}\le1,$
\item
$\lambda_{S,{\mathbb{L}}}^{(N)}(t)\le
\min\mathset{\lambda_{S,\mathcal{G}}^{(N)}(t),\hat{\gamma}_S(t)
}$,
\end{enumerate}
where
\[
\hat{\gamma}_S(t)=\min\limits_{I\in V(S)}\frac{\deg_{\partial V(S)(t)}(I)}{\int_S\gamma(t)(x)\,dx}
\]
and $\deg_{\partial V(S)(t)}(I)$
is the degree of vertex $I$ in the edge boundary $\partial V(S)(t)$, viewed as a subgraph of $\mathcal{G}(t)$,  for $t\in[0,a)$.
\end{teo}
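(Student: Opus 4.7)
The plan is to establish each of the upper bounds by exhibiting an admissible test function whose Rayleigh quotient evaluates to (or is bounded by) the claimed value. The infima defining $\lambda_{1,\mathbb{L}}^{(D)}(t)$ and $\lambda_{S,\mathbb{L}}^{(N)}(t)$ are taken over large classes of functions, so each inequality in the theorem arises from a distinct test-function construction.

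First, to show $\lambda_{1,\mathbb{L}}^{(D)}(t)\le\lambda_{1,\mathcal{G}}^{(D)}(t)$ (and its von Neumann counterpart), I would restrict the infimum to the finite-dimensional subspace $X_N\cap D_S^*(t)$ of step functions $f=\sum_{I\in V(S)}f_I\Omega_{I,N}$. Since $A(x,y,t)=p^N A_{IJ}(t)$ is constant on each product $B_{p^{-N}}(I)\times B_{p^{-N}}(J)$ and $|f(x)-f(y)|^2=|f_I-f_J|^2$ on such a product, the double integral in the numerator collapses to a finite sum over vertex pairs (with the outer $p^N$ prefactor balancing against the two factors of $p^{-N}$ coming from ball volumes), and the denominator collapses to $p^{-N}\sum_{I\in V(S)}|f_I|^2\gamma_I(t)$, recovering---up to the designed normalization---the discrete Rayleigh quotient defining $\lambda_{1,\mathcal{G}}^{(D)}(t)$. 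The admissibility condition $f=0$ on $\delta_NS(t)$ translates to $f_I=0$ for $I\in V(\delta_NS(t))$, and the mean-zero condition $\int_Sf\gamma\,dx=0$ on $X_N$ becomes $\sum_{I\in V(S)}f_I\gamma_I(t)=0$, matching the graph-side constraints precisely.

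For the bound by $\hat{\gamma}_S(t)$, I would choose $I_0\in V(S)$ attaining the minimum in its definition and use a test function concentrated at $I_0$. A natural candidate for the Dirichlet setup is the indicator $\Omega_{I_0,N}$, which vanishes on $\delta_NS(t)$; for the von Neumann setup, a Kozyrev wavelet of scale $r=N$ supported in $B_{p^{-N}}(I_0)$, which by Lemma \ref{vnbc} lies in $N_S^*(t)$ and has zero mean so that $\int_Sf\gamma\,dx=0$ holds automatically. Evaluating the Rayleigh quotient on these functions, using the piecewise constancy of $A(x,y,t)$ and the decomposition $\Omega_S^*(t)=(\Omega_E(t)\cap S\times S)\cup\partial_NS(t)$, should yield a numerator proportional to $\deg_{\partial V(S)(t)}(I_0)$ and a denominator proportional to $\int_S\gamma(t)(x)\,dx$, producing $\hat{\gamma}_S(t)$ exactly. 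The final inequality $\hat{\gamma}_S(t)\le 1$ then follows algebraically from $\deg_{\partial V(S)(t)}(I)\le\gamma_I(t)$ together with $\int_S\gamma(t)\,dx\ge p^{-N}\gamma_I(t)$ (the ball at $I$ lies in $S$), after incorporating the $p^N$ scaling built into the quotient.

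The principal obstacle will be verifying this second step: a naive single-vertex test function produces not just the boundary contribution $\deg_{\partial V(S)(t)}(I_0)$ but also a doubled interior-degree term $2\deg^{\mathrm{int}}_{V(S)}(I_0)$ coming from the symmetric counting of interior edges in $\Omega_E(t)\cap S\times S$, whereas $\partial_NS(t)$ contributes only one orientation. Eliminating or appropriately absorbing this interior term will likely require either a refined choice combining several Kozyrev wavelets at $I_0$ to exploit the vanishing of $\int\psi\,dx$ against the constant kernel on interior ball-products, or a perturbation argument that trades the interior contribution against admissible modifications consistent with the infimum.
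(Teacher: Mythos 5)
Your graph-comparison step and your choice of admissible test functions are essentially those of the paper (the paper also evaluates the Rayleigh quotient of a single Kozyrev wavelet supported in one vertex ball, for both boundary conditions), but the proposal stalls exactly where you say it does, and the obstacle is in fact worse than you describe. Suppose $S$ is a union of vertex balls and $A_{I_0I_0}(t)=0$. For \emph{any} nonzero $f$ supported in $B_{p^{-N}}(I_0)$ --- your indicator, a Kozyrev wavelet, or any linear combination of wavelets --- the kernel is constant on products of distinct vertex balls, $f(x)\overline{f(y)}=0$ whenever $x,y$ lie in distinct balls, and the same-ball kernel vanishes; hence numerator and denominator are both proportional to $\norm{f}_2^2$ and
\[
\frac{p^N\int_{\Omega_S^*(t)}A(x,y,t)\absolute{f(x)-f(y)}^2\,dx\,dy}{\int_S\absolute{f(x)}^2\gamma(x)(t)\,dx}
=p^N\,\frac{2\deg^{\mathrm{int}}(I_0)+\deg_{\partial V(S)(t)}(I_0)}{\gamma_{I_0}(t)},
\qquad
\deg^{\mathrm{int}}(I_0)=\sum_{J\in V(S)}A_{I_0J}(t).
\]
Three consequences: (i) the doubled interior-degree term you flagged is really there; (ii) the denominator is proportional to $\gamma_{I_0}(t)$, \emph{not} to $\int_S\gamma(t)(x)\,dx$ as you assert --- the two agree only when $V(S)=\mathset{I_0}$; (iii) since the quotient is identical for every $f$ supported in that ball, your first proposed remedy (combining several Kozyrev wavelets at $I_0$) provably cannot remove anything: the mean-zero property never interacts with this quadratic form. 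Your second remedy (a perturbation argument) is not formulated, so the key inequalities $\lambda_{1,\mathbb{L}}^{(D)}(t)\le\hat{\gamma}_S(t)$ and $\lambda_{S,\mathbb{L}}^{(N)}(t)\le\hat{\gamma}_S(t)$ are not established. For what it is worth, the paper's own proof does exactly what you first intended: it takes one Kozyrev wavelet and asserts, with no computation, that its Rayleigh quotient equals $\deg_{\partial V(S)(t)}(I)/\int_S\gamma(x)(t)\,dx$; the computation above contradicts that assertion whenever $I_0$ has neighbours inside $V(S)$ or $V(S)$ has more than one vertex. So the obstacle you identified is a genuine discrepancy with the published argument, not a trick you missed.

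The second genuine error is your claim that $\hat{\gamma}_S(t)\le1$ ``follows algebraically.'' The two inequalities you invoke, $\deg_{\partial V(S)(t)}(I)\le\gamma_I(t)$ and $\int_S\gamma(t)(x)\,dx\ge p^{-N}\gamma_I(t)$, yield only $\hat{\gamma}_S(t)\le p^N$, and the statement $\hat{\gamma}_S(t)\le1$ is simply false in general: for the complete graph on two vertices $a,b$ with unit edge weight and $S=B_{p^{-N}}(a)$ one gets $\hat{\gamma}_S(t)=1/p^{-N}=p^N>1$. The inequality $\min\mathset{\lambda_{1,\mathcal{G}}^{(D)}(t),\hat{\gamma}_S(t)}\le1$ must instead come from the graph factor: the paper cites \cite[\S 8.4]{Chung1997} for $\lambda_{1,\mathcal{G}}^{(D)}(t)\le1$, which suffices since the minimum is bounded by $\lambda_{1,\mathcal{G}}^{(D)}(t)$. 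A smaller point in the same direction: your Dirichlet test function $\Omega_{I_0,N}$ need not vanish on $\delta_NS(t)$ when $S$ contains only part of the ball $B_{p^{-N}}(I_0)$; the paper's choice of a wavelet supported in a smaller ball $B_{p^{-r}}(I)\subset B_{p^{-N}}(I)\cap S$ avoids this.
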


\begin{proof}
The inequalities $\lambda_{1,\mathbb{L}}^{(D)}(t)\le\lambda_{1,\mathcal{G}}^{(D)}(t)$ and $\lambda_{S,\mathbb{L}}^{(N)}(t)\le\lambda_{S,\mathcal{G}}^{(N)}(t)$ are obvious. Now, let $\psi$ be a Kozyrev wavelet supported in $B_{p^{-r}}(I)\subset B_{p^{-N}}(I)\cap S$ for some vertex $I\in V(S)$. 
It clearly satisfies the Dirichlet boundary condition,  and, by Lemma \ref{vnbc}, also the von Neumann boundary condition.
Hence, the Dirichlet and the von Neumann eigenvalues are both less than or equal to:
\[
\frac{\sum\limits_{J\in\delta V(S)(t)}A_{IJ}}
{\int_S\gamma(x)(t)\,dx}
=\frac{\deg_{\partial V(S)(t)}(I)}{\int_S\gamma(x)(t)\,dx}
%\le\hat{\gamma}_S(t),
\]
where $\delta V(S)(t)$ is the vertex boundary of the set $V(S)\subset V$ in $\mathcal{G}(t)$. This now shows the first inequality in \emph{1.}, and that of \emph{2.}
The remaining inequality in \emph{1}.\ is given in \cite[\S 8.4]{Chung1997}. This proves the theorem.
\end{proof}

The following examples show that the inequalities can be strict:

\begin{exm}
Let $\mathcal{G}(t)$ be the complete graph on three vertices $a,b,c$, and let $S=B_{p^{-N}}(a)\sqcup B_{p^{-N}}(b)$.
Then, according to \cite[\S 8.4]{Chung1997}, $\lambda_{1,\mathcal{G}}^{(D)}(t)$ is an eigenvalue of the submatrix of the graph-Laplacian of  $\mathcal{G}(t)$ spanned by the vertices $a,b$.
This is the matrix
\[
L_S=\begin{pmatrix}
1&-\frac12\\
-\frac12&1
\end{pmatrix}
\]
with eigenvalues $\frac32$ and $\frac12$.
Hence, $\lambda_{1,\mathcal{G}}^{(D)}(t)=\frac12$. But since the degree of each vertex of $\mathcal{G}(t)$ equals $2$, it follows that
\[
\lambda_{1,\mathbb{L}}^{(D)}(t)\le\hat{\gamma}_S(t)=\frac14<\frac12=\lambda_{1,\mathcal{G}}^{(D)}(t)
\]
for given $t\ge0$.
\end{exm}

\begin{exm}
Let $\mathcal{G}(t)$ be the complete graph on two vertices $a,b$, and let $S=B_{p^{-N}}(a)$. Then any non-trivial function $f$ constant on the vertex balls $B_{p^{-N}}(a)$ and $B_{p^{-N}}(b)$ satisfying the von Neumann boundary condition, must satisfy
$f(a)= 0$ and thus
\[
\int_S\absolute{f(x)}^2\gamma(x)(t)\,dx=0
\]
This means that 
\[
\lambda_{S,\mathcal{G}(t)}^{(N)}(t)=\infty
\]
and consequently
\[
\lambda_{S,\mathbb{L}}^{(N)}(t)<\lambda_{S,\mathcal{G}}^{(N)}(t)
\]
because $\hat{\gamma}_S(t)<\infty$.
\end{exm}

%%%%%%%%%%%%%%%%%
\section{Non Autonomous Cauchy Problem}

Consider the $p$-adic non autonomous Cauchy problem for a finite graph.
Most of the underlying theory for this section is taken from
\cite{EN2006,Fattorini1983}.

%%%%%%%%%%
\subsection{Review of the abstract Cauchy problem}

Let $A(t)$ be a family of operators with $t\in\mathbb{R}$ on a Banach space $X$. The following problem is addressed:

\begin{equation}\label{nACP_abstract}
    \begin{cases}
      \frac{\partial u}{\partial t} (t)=A(t)u(t)\\
      u(s)=x\in X
    \end{cases}
\end{equation}
with $t,s \in \mathbb{R}$ and $t\geq s$. The system $(\ref{nACP_abstract})$ is called (nACP).

\begin{defi}
A continuous function $u:[s,\infty)\rightarrow X$ is called a (strict) \textbf{solution} of (\ref{nACP_abstract}), if $u\in C^1([s,\infty),X)$, $u(t)\in D(A(t))$ for all $t\geq s$, $u(s)=x$, and $\frac{\partial u}{\partial t}=A(t)u$ for $t\geq 0$.
\end{defi}
The solution of non-autonomous Cauchy problem on some Banach space $X$ can be given by a so called \textit{evolution family} which can be defined as follows, cf.\ \cite{Schnaubelt2006}:
\begin{defi}
A family $(U(t,s))_{t\geq s}$ of linear operators on a Banach space $X$ is called an \textbf{evolution family} for (\ref{nACP_abstract}), if 
\begin{enumerate}
    \item $U(t,r)U(r,s)=U(t,s)$, $U(t,t)=id$, $t\geq s \in \mathbb{R}$.
    \item The mapping $(s,t)\mapsto U(t,s)$ is strongly continuous. \item $||U(t,s)||\leq M e^{\omega(t-s)}$ for some $M\geq 1$, $\omega\in \mathbb{R}$, $t\geq s \in \mathbb{R}$.
    \item For $s\in \mathbb{R}$ the regularity space
    \[
    Y_s:=\{y\in X:[s,\infty) \ni t\rightarrow U(t,s)y \ solves \ (nACP)\}
    \]
\end{enumerate}
is dense in X, where $U(t,s)y$ is the unique solution of (nACP). 
\end{defi}
If there exists a solving evolution family, then (\ref{nACP_abstract}) is called \textit{well-posed}. We have the following Theorem.

\begin{teo}\label{fattoriniTheorem}
Let $X$ a Banach space and for every $t>0$ let $A(t)$ be a bounded linear operator on $X$. If the function $t\mapsto A(t)$ is strongly continuous for $0\le t<T$
(that is, $t\mapsto A(t)u$ is continuous for each $u\in X$), then for every $x\in X$ the problem (\ref{nACP_abstract}) is well-posed.
\end{teo}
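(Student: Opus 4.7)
The plan is to construct the evolution family $U(t,s)$ directly via Picard iteration (the Dyson series) and then verify the four defining axioms of an evolution family from the preceding definition. The key simplification compared with the general unbounded-generator setting is that here $D(A(t)) = X$ for every $t$, so all domain regularity issues vanish and the whole argument becomes quantitative rather than domain-theoretic.

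First I would exploit strong continuity to deduce a uniform operator-norm bound on compact subintervals. For each $x \in X$ the map $t \mapsto A(t)x$ is continuous on $[0,T_0]$ with $T_0 < T$, hence pointwise bounded; the Banach-Steinhaus theorem then yields $M := \sup_{t \in [0,T_0]} \|A(t)\| < \infty$. This uniform bound drives every convergence estimate that follows. Fix $s \in [0,T)$ and $x \in X$ and set up the Picard iteration $u_0(t) = x$, $u_{n+1}(t) = x + \int_s^t A(r)u_n(r)\,dr$ (Bochner integral, well-defined because the integrand is strongly continuous). A routine induction gives $\|u_{n+1}(t) - u_n(t)\| \le M^{n+1}(t-s)^{n+1}\|x\|/(n+1)!$, so the telescoping sum converges uniformly on $[s,T_0]$ to a continuous limit $u(\cdot;s,x)$. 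The limit satisfies the integral equation $u(t) = x + \int_s^t A(r)u(r)\,dr$, and since $r \mapsto A(r)u(r)$ is continuous, $u$ is $C^1$ and solves (nACP) in the strict sense. Uniqueness follows from Gronwall's inequality applied to the difference of two solutions.

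Defining $U(t,s)x := u(t;s,x)$, linearity in $x$ is inherited from the scheme, and the Dyson-type estimate gives $\|U(t,s)\| \le e^{M(t-s)}$, i.e. axiom \emph{3.} The cocycle property $U(t,r)U(r,s) = U(t,s)$ is then obtained by uniqueness: both sides, viewed as functions of $t \ge r$, solve (nACP) with datum $U(r,s)x$ at time $r$ and must therefore agree. Density of the regularity space $Y_s$ is immediate, since $Y_s = X$ by the very construction of $U(\cdot,s)x$ as a strict solution.

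The step I expect to cost the most care is axiom \emph{2.}, the joint strong continuity of $(t,s) \mapsto U(t,s)y$. Continuity in $t$ for fixed $s$ is immediate from the integral equation. For joint continuity I would use the decomposition $U(t,s)y - U(t_0,s_0)y = (U(t,s) - U(t,s_0))y + (U(t,s_0) - U(t_0,s_0))y$, control the second term by continuity in $t$, and rewrite the first via the cocycle relation $U(t,s) = U(t,s_0)U(s_0,s)$ (or its mirror, depending on the order of $s,s_0$). Then the pointwise estimate $\|U(s_0,s)y - y\| \le M|s - s_0|\,e^{M|s-s_0|}\|y\|$ coming straight from the integral equation, combined with the uniform operator bound $\|U(t,s_0)\| \le e^{MT_0}$ on the compact rectangle, closes the argument. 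The bookkeeping here is the main technical nuisance, but it goes through cleanly once the uniform bound from the first step is in hand.
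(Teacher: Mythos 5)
Your proposal is correct, but there is nothing in the paper to compare it against line by line: the paper's entire ``proof'' of Theorem \ref{fattoriniTheorem} is the citation \cite[Thm.\ 7.1.1]{Fattorini1983}. What you have written is, in substance, a self-contained rendition of the classical argument behind that citation: for bounded $A(t)$ the problem reduces to the integral equation $u(t)=x+\int_s^t A(r)u(r)\,dr$, solved by Picard iteration (equivalently, the Dyson series), with the Banach--Steinhaus uniform bound $M=\sup_{t\in[0,T_0]}\norm{A(t)}<\infty$ on compact subintervals as the engine of all estimates. Your individual steps check out: the factorial estimate $\norm{u_{n+1}(t)-u_n(t)}\le M^{n+1}(t-s)^{n+1}\norm{x}/(n+1)!$ is the correct induction, Gronwall gives uniqueness, the cocycle law follows from uniqueness, $Y_s=X$ because every initial datum produces a strict solution (this is where boundedness of the generators pays off, exactly as you say), and the joint continuity of $(t,s)\mapsto U(t,s)y$ follows from the near-diagonal estimate $\norm{U(s_0,s)y-y}\le M\absolute{s_0-s}e^{M\absolute{s_0-s}}\norm{y}$ combined with the cocycle identity and the uniform bound $\norm{U(t,s)}\le e^{MT_0}$ on the compact rectangle. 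The one thing your construction makes explicit that the bare citation does not is quantitative: the growth bound $\norm{U(t,s)}\le e^{M(t-s)}$ with $M$ depending only on the compact subinterval, which is also what the paper implicitly uses later in Remark \ref{solutionProperties}. So: correct proof, genuinely self-contained where the paper outsources, and in fact the same mathematical route the cited source takes.
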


\begin{proof}
\cite[Thm.\ 7.1.1]{Fattorini1983}.
\end{proof}

We can apply the above Theorem to the problem (\ref{nAACP}) for time-changing graphs.
\begin{prop}
The system (\ref{nAACP}) is well-posed.
\end{prop}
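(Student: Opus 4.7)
The plan is to invoke Theorem \ref{fattoriniTheorem} (Fattorini) directly, so the task reduces to checking its three hypotheses in the setting of (\ref{nAACP}) with $X = L^2(K_N,\mathbb{C})$ and $A(t) = \mathbb{L}(t)$. The underlying space $L^2(K_N,\mathbb{C})$ is obviously a Banach (even Hilbert) space, so the first hypothesis is free.

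Next, I would verify that $\mathbb{L}(t)$ is, for each fixed $t \ge 0$, a bounded linear operator on $L^2(K_N,\mathbb{C})$. This has essentially already been done: Proposition \ref{degreeEV} (for a fixed graph) shows that $\mathbb{L} = \mathcal{A} - \Gamma$ is bounded, and the same argument applies to $\mathbb{L}(t) = \mathcal{A}(t) - \Gamma(t)$ once one observes that the kernel $A(x,y,t)$ is a finite linear combination of tensor products $\Omega_{I,N}(x)\Omega_{J,N}(y)$ with complex coefficients $A_{IJ}(t)$, making $\mathcal{A}(t)$ a finite-rank (hence bounded) operator, while $\Gamma(t)$ is multiplication by the bounded function $\gamma(t)(\cdot)$.

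The remaining hypothesis is strong continuity of $t \mapsto \mathbb{L}(t)$. This is exactly the content of Theorem \ref{stronglyContinuous}, provided that the matrix entries $t\mapsto A_{I,J}(t)$ are uniformly continuous. I would therefore make this the standing assumption of the proposition (the paper already works in this regime). Applying Theorem \ref{stronglyContinuous} gives the required continuity of $t\mapsto \mathbb{L}(t)u$ for every $u \in L^2(K_N,\mathbb{C})$.

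With all three hypotheses in place, Theorem \ref{fattoriniTheorem} immediately yields well-posedness of (\ref{nAACP}) on $[0,T)$ for every $T>0$, and hence on $[0,\infty)$. I do not expect any serious obstacle here: the argument is purely a matter of recording that the heavy lifting has already been performed (boundedness in Proposition \ref{degreeEV} and strong continuity in Theorem \ref{stronglyContinuous}), and that Fattorini's theorem supplies the evolution family abstractly. The only subtlety worth flagging is whether the paper intends $[0,\infty)$ or an arbitrary finite interval; since Theorem \ref{fattoriniTheorem} is stated for $0 \le t < T$, one argues for arbitrary finite $T$ and then concludes by the uniqueness of solutions on compatible subintervals.
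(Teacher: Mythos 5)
Your proposal is correct and follows exactly the paper's route: the paper's proof simply states that well-posedness is a direct consequence of Theorem \ref{stronglyContinuous}, with Theorem \ref{fattoriniTheorem} supplying the abstract well-posedness for bounded, strongly continuous families $t\mapsto A(t)$. You merely make explicit the details the paper leaves implicit (boundedness of $\mathbb{L}(t)$, the standing uniform-continuity assumption on $t\mapsto A_{I,J}(t)$, and the passage from finite $T$ to $[0,\infty)$).
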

\textit{Proof.} This is a direct consequence of Theorem
\ref{stronglyContinuous}. \qed \newline

\begin{rem}\label{solutionProperties}
If we let $U(t,s)x=u(t)$, where $u(t)$ is the solution of (\ref{nAACP}), then the following holds  true: 
\begin{enumerate}
    \item $||U(t,s)||\leq e^{\int_{s}^{t} ||\mathbb{L}(\tau)||d\tau}$.
    \item $(t,s)\mapsto U(t,s)$ is continuous in the uniform operator topology for $s\leq t$.
    \item $\partial U(t,s)/\partial t= \mathcal{L}(t) U(t,s)$.
    \item $\partial U(t,s)/\partial s=-U(t,s)\mathcal{L}(s)$. 
\end{enumerate}
This is due to the fact that $t\mapsto \mathbb{L}(t)$ is continuous in the uniform norm topology, cf.\ \cite[Theorem 5.2]{Pazy1983}.
\end{rem}
In general, the solution does not have a closed form, but in special cases it does, as the following example shows.

\begin{exm} Under the hypothesis of Theorem \ref{fattoriniTheorem}, assume that $A(t)$ and $\int_{s}^{t} A(\tau) d\tau$ commute for each $s,t$. Then 
\[
U(t,s)=e^{\int_{s}^{t}A(\tau) d\tau}
\]
with the integral $\int A(\tau) d\tau$ being interpreted "elementwise". 
\end{exm}

\begin{proof}
\cite[Example 7.1.9]{Fattorini1983}.
\end{proof}

%%%%%%%%%%
\subsection{Reviewing  the autonomous case for graphs}

First, we review how to solve the heat equation in the autonomous case:

\begin{equation}\label{HEAC}
    \begin{cases}
      \frac{\partial u}{\partial t} (x,t)=\mathbb{L}u(x,t)\\
      u(0,x)=u_0(x)\in L^2(K_n,\mathbb{C})
    \end{cases}\,,
\end{equation}
where $\mathbb{L}$ is interpreted as $\mathbb{L}=\mathbb{L}(t_0)$ for some $t_0\geq0$. The following Theorem allows us to apply the Fourier Method (or the separation of variables method) to solve the system (\ref{HEAC}).

\begin{teo}[Z\'u\~niga-Galindo, 2020]\label{somethingKnown}
The elements of the set: 
\[
\Spec(L) \bigsqcup \{-\gamma_I: I\in V(\mathcal{G})\}
\]
 are the eigenvalues of of $\mathbb{L}$. The corresponding eigenfunctions are given by the following infinite set 
\[
\left\{ \frac{\varphi_\mu}{||\varphi_\mu||_2}: \mu\in \Spec(L)\right\}
\bigsqcup 
\left\{
\Psi_{j,I,r}:I\in V(\mathcal{G}), j\in \{1,...,p-1\}, r\in \mathbb{Z}, r\geq N\right\},
\]
where the functions $\varphi_\mu(x)$ are defined by: 
\begin{equation}
  \varphi_\mu(x)=\sum_{J\in V(\mathcal{G})} \varphi_\mu(J)\Omega(p^{-N}|x-J|_p)
\end{equation}
and the vector $(\varphi_\mu(J))_{J\in V(\mathcal{G})}$ is an eigenvector of the (negative semi-definite) Laplacian matrix $L$ associated with adjacency matrix $A$, corresponding to $\mu\in\Spec(L)$, and $\Psi_{r,I,j}$ are the Kozyrev functions of the form (\ref{KozyrevWavelet}). Furthermore we have the following orthogonal decomposition 
\begin{equation}\label{orthoDecomp}
    L^2(K_N,\mathbb{C})=X_N
    %L^2(G_A,\mathbb{C})
    \oplus \mathcal{L}_0^2(K_N).
\end{equation}

\end{teo}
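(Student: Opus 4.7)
The strategy is to exploit the orthogonal decomposition
\[
L^2(K_N,\mathbb{C}) = X_N \oplus \mathcal{L}_0^2(K_N)
\]
already established as a proposition above, and to show that $\mathbb{L}$ preserves each summand so that it can be diagonalised block by block. The claimed decomposition (\ref{orthoDecomp}) is then exactly that proposition, so nothing further would be required for it.

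For the first block, I would analyse $\mathbb{L}$ on $\mathcal{L}_0^2(K_N)$. The corollary following Proposition \ref{kernelAdjacency} ensures that $\mathcal{A}u = 0$ for every $u \in \mathcal{L}_0^2(K_N)$; in particular $\mathcal{A}\Psi_{j,I,r}=0$ for every Kozyrev wavelet. Therefore $\mathbb{L}\Psi_{j,I,r} = -\gamma \cdot \Psi_{j,I,r}$. Because $r \geq N$, the support of $\Psi_{j,I,r}$ lies inside the single vertex ball $B_{p^{-N}}(I)$, on which the degree function $\gamma(x)=\sum_J \gamma_J \Omega_{J,N}(x)$ is constant with value $\gamma_I$. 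This gives $\mathbb{L}\Psi_{j,I,r} = -\gamma_I \Psi_{j,I,r}$, producing the eigenvalues $-\gamma_I$ with Kozyrev wavelets as eigenfunctions.

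For the second block, I would restrict $\mathbb{L}$ to $X_N$. The basis $\{p^{-N/2}\Omega_{I,N}\}_{I\in V(\mathcal{G})}$ identifies $X_N$ with $\mathbb{C}^n$, and Remark \ref{wrongStatement} has already observed that on $X_N$ the operator $\mathbb{L}$ coincides with the operator $\mathcal{L}$ associated with the Laplacian matrix $L$; under the identification above this is simply multiplication by $L$. Hence $\mathbb{L}|_{X_N}$ has spectrum $\Spec(L)$, and any eigenvector $(\varphi_\mu(J))_J$ of $L$ for $\mu$ corresponds to the eigenfunction $\varphi_\mu(x) = \sum_J \varphi_\mu(J)\Omega_{J,N}(x)$ of the stated form.

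Combining both blocks yields a complete orthonormal basis of $L^2(K_N,\mathbb{C})$ made of eigenfunctions of $\mathbb{L}$, with total spectrum $\Spec(L)\sqcup\{-\gamma_I\mid I\in V(\mathcal{G})\}$. The main subtlety I anticipate is the clean identification of multiplication by $\gamma$ as the scalar $\gamma_I$ on the support of $\Psi_{j,I,r}$; once this is secured via the support-inclusion claim proved just after the definition of Kozyrev functions, everything else reduces to bookkeeping on the two invariant summands.
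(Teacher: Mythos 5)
Your argument is correct in substance, but it is genuinely different from what the paper does: the paper offers no proof of Theorem \ref{somethingKnown} at all --- it attributes the result to Z\'u\~niga-Galindo and simply cites \cite[Thm.\ 10.1]{ZunigaNetworks}. What you have done is reconstruct that external proof inside the paper's own framework: $\mathcal{A}$ annihilates every Kozyrev wavelet (Proposition \ref{kernelAdjacency} together with the orthogonality relation (\ref{orthogonality})); multiplication by $\gamma$ acts as the scalar $\gamma_I$ on the support ball $B_{p^{-r}}(I)\subset B_{p^{-N}}(I)$ of $\Psi_{j,I,r}$, so $\mathbb{L}\Psi_{j,I,r}=-\gamma_I\Psi_{j,I,r}$; and on $X_N$ the operator $\mathbb{L}$ coincides with $\mathcal{L}$, i.e.\ with the symmetric matrix $L$ in the basis $\{\Omega_{I,N}\}$ (Remark \ref{wrongStatement}), which supplies the eigenfunctions $\varphi_\mu$ with eigenvalues $\mu\in\Spec(L)$. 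Completeness of the joint orthonormal system then rules out any further eigenvalues. Your route buys self-containedness, and it is essentially the argument of the cited source, so the two approaches are complementary rather than in conflict.

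One point you should make explicit, because it is hidden in the paper's notation: your step that ``$\mathbb{L}$ preserves each summand'' is only true if $\mathcal{L}_0^2(K_N)$ is understood as $X_N^{\perp}$, i.e.\ the closed span of the Kozyrev wavelets supported in $K_N$ (equivalently, the functions with zero mean on \emph{each} vertex ball), rather than via the literal definition given in the paper (zero total integral over $K_N$). Under the literal definition, (\ref{orthoDecomp}) is not even a direct sum when $n>1$: the function $f=\Omega_{I,N}-\Omega_{J,N}$ lies in $X_N$ and has total integral zero; moreover, multiplication by $\gamma$ does not preserve that space, since $\int_{K_N}\gamma(x)f(x)\,dx=p^{-N}(\gamma_I-\gamma_J)\neq 0$ whenever $\gamma_I\neq\gamma_J$. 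Your computations on the wavelets themselves are unaffected, but the block-diagonalisation language --- and indeed the theorem as stated --- require the $X_N^{\perp}$ reading of $\mathcal{L}_0^2(K_N)$.
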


\begin{proof}
\cite[Thm.\ 10.1]{ZunigaNetworks}.
\end{proof}

According to \cite[Thm.\ 4.2]{ZunigaNetworks}, the Cauchy problem of the heat equation (\ref{HEAC}), with $t\geq 0$ has a solution which is unique. The solution is given by the semigroup $\{e^{t\mathbb{L}}\}$ generated by $\mathbb{L}$. By the decomposition (\ref{orthoDecomp}), we propose a solution of the form 
\begin{equation}\label{expansion}
    u(x,t)=\sum_{\mu\in\Spec(L)} c_{\mu}(t)\varphi_{\mu}(x)+\sum_{\supp(\Psi_{j,I,r}) \subset K_N} c_{r,I,j}(t)\psi_{j,I,r}(x),
\end{equation}
where the infinite set $\{\Psi_{j,I,r}|\supp(\Psi_{j,I,r}) \subset K_N\}$ is described in Theorem \ref{somethingKnown}. Hence, the solution is an infinite sum, and the convergence takes place in $L^2(K_N)$. After inserting this into (\ref{HEAC}) we obtain 
\begin{equation*}
    \begin{cases}
      \frac{\partial c_\mu}{\partial t} =\mu c_{\mu}\\
      \frac{\partial c_{r,I,j}}{\partial t}=-\gamma_{I}c_{j,I,r}
    \end{cases}\,.
\end{equation*}
where $\mu$ and $\gamma_I$ are the eigenvalues associated to $\varphi_\mu$ and $\psi_{j,I,r}$, respectively.
\newline

This proves that 
\begin{equation}\label{expansion2}
    e^{t\mathbb{L}}u_0(x)=u(x,t)=\sum_{\mu\in \Spec(L)} C_{\mu}e^{\mu t}\varphi_{\mu}(x)+\sum_{\supp(\Psi_{j,I,r}) \subset K_N} C_{j,I,r}e^{-\gamma_I t}\psi_{j,I,r}(x).
\end{equation}
with $C_I$ and $C_{j,I,r}$ is uniquely determined by the initial conditions 
\begin{equation}\label{initialCond}
    u_0(x)=u(x,0)=\sum_{\mu\in\Spec(L)} C_{\mu}\varphi_{\mu}(x)+\sum_{\supp(\Psi_{j,I,r}) \subset K_N} C_{j,I,r}\psi_{j,I,r}(x),
\end{equation}
and solution (\ref{expansion2}) is an infinite convergent sum.

%%%%%%%%%%%%%
\subsection{Approximating the non-autonomous case for graphs}

We now want to use the Trotter product formula to approximate the operator $U(t,s)$, i.e., the solutions of (\ref{nAACP}). As approximating operators we choose the product 
\[
\prod_{k=1}^{n} e^{t/n\mathcal{L}(s+kt/n)}=e^{t/n\mathcal{L}(s+t)}\cdot e^{t/n\mathcal{L}(s+t-t/n)}\dots e^{t/n\mathcal{L}(s+t/n)}.
\]
We have the following proposition:

\begin{prop}\label{uniformConvergence}
Let $\{A(t)\}_{t\in \mathbb{R}}$ be a family of bounded and strongly continuous operators on a Banach space $X$, and let $U(t,s)$ be its respective evolution family. Then 
\begin{equation}
    \lim_{n\rightarrow \infty} \prod_{k=1}^{n}e^{t/nA(s+kt/n)}x=U(s+t,s)x
\end{equation}
for all $x\in X$ and uniformly for $s$ and $t$ in compact intervals of $\mathbb{R}$ and $\mathbb{R}_{+}$, respectively. 
\end{prop}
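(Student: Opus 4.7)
The plan is to compare the approximating product $V_n(t,s) := \prod_{k=1}^n e^{(t/n)A(s+kt/n)}$ with the evolution family partitioned on the same grid, then reduce the difference to a sum of one-step errors via a telescoping identity and a Duhamel formula, and finally upgrade strong continuity of $A(\cdot)$ to a uniform estimate on a compact orbit. Fix compact intervals $s\in[a,b]$ and $t\in[0,T]$; set $J:=[a,b+T]$, $h:=t/n$, and $\tau_k:=s+kh$. By the uniform boundedness principle applied to the strongly continuous family $\{A(\tau)\}_{\tau\in J}$, there exists $M$ with $\sup_{\tau\in J}\|A(\tau)\|\le M$, and the evolution family axiom yields the matching factorisation $U(s+t,s)=U(\tau_n,\tau_{n-1})\cdots U(\tau_1,\tau_0)$.

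I apply the elementary telescoping identity
\[
a_n\cdots a_1 - b_n\cdots b_1 \;=\; \sum_{k=1}^n a_n\cdots a_{k+1}(a_k-b_k)b_{k-1}\cdots b_1
\]
with $a_k:=e^{hA(\tau_k)}$ and $b_k:=U(\tau_k,\tau_{k-1})$. The sandwich operators on either side have operator norms at most $e^{Mh}$, so their collected contribution is uniformly bounded by $e^{MT}$. For the one-step difference I differentiate $\sigma\mapsto e^{(h-\sigma)A(\tau_k)}U(\tau_{k-1}+\sigma,\tau_{k-1})$ on $[0,h]$, using items 3--4 of Remark \ref{solutionProperties} and the commutation of $A(\tau_k)$ with its own exponential, and integrate to obtain the Duhamel-type representation
\[
e^{hA(\tau_k)} - U(\tau_k,\tau_{k-1}) \;=\; \int_0^h e^{(h-\sigma)A(\tau_k)}\bigl(A(\tau_k)-A(\tau_{k-1}+\sigma)\bigr)U(\tau_{k-1}+\sigma,\tau_{k-1})\,d\sigma.
\]

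Applying the $k$-th summand to a fixed $x\in X$ and using the evolution identities $b_{k-1}\cdots b_1 x=U(\tau_{k-1},s)x$ and $U(\tau_{k-1}+\sigma,\tau_{k-1})U(\tau_{k-1},s)=U(\tau_{k-1}+\sigma,s)$, I see that the integrand is $A(\cdot)$ evaluated on vectors in the orbit
\[
K(x)\;:=\;\{U(r,s')x : a\le s'\le r\le b+T\}.
\]
The main obstacle is precisely to upgrade the pointwise strong continuity of $A(\cdot)$ to an estimate that is uniform across this orbit. I resolve it by observing that $K(x)$ is compact, as the continuous image of a compact parameter set under the strongly continuous map $(r,s')\mapsto U(r,s')x$; the joint map $(\tau,y)\mapsto A(\tau)y$ on $J\times K(x)$ is continuous, because pointwise strong continuity in $\tau$ combined with the uniform operator bound $M$ gives $\|A(\tau_n)y_n-A(\tau)y\|\le M\|y_n-y\|+\|(A(\tau_n)-A(\tau))y\|\to 0$. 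Compactness then yields a modulus $\omega_x(h)\to 0$ with $\|(A(\tau)-A(\tau'))y\|\le\omega_x(h)$ whenever $y\in K(x)$ and $|\tau-\tau'|\le h$.

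Assembling the pieces, each one-step contribution $E_k\,U(\tau_{k-1},s)x$ has norm at most $h\,e^{Mh}\omega_x(h)$, and after multiplication by the sandwich bound $e^{M(n-k)h}\le e^{MT}$ and summation over $k=1,\dots,n$, I obtain
\[
\|V_n(t,s)x - U(s+t,s)x\| \;\le\; T\,e^{MT+Mh}\,\omega_x(t/n),
\]
which tends to $0$ as $n\to\infty$, uniformly in $s\in[a,b]$ and $t\in[0,T]$, as required.
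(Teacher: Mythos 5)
Your proof is correct. Note, though, that the paper itself offers no argument for this proposition: its ``proof'' is the single citation \cite[Ch.\ III.5.9, Prop.]{EN2006}, so your self-contained derivation is necessarily a different route. What you do --- factorise $U(s+t,s)$ along the same time grid by the cocycle law, telescope the difference of the two ordered products, express each one-step error $e^{hA(\tau_k)}-U(\tau_k,\tau_{k-1})$ by a Duhamel integral, and convert strong continuity of $A(\cdot)$ into a uniform modulus $\omega_x$ on the compact orbit $K(x)$ --- is the standard mechanism behind such nonautonomous product formulas, and each step checks out: the uniform bound $M$ via the uniform boundedness principle, the sandwich bounds $e^{MT}$, the joint continuity of $(\tau,y)\mapsto A(\tau)y$, and the compactness (Heine--Cantor) argument for $\omega_x$ are all sound, and the resulting estimate $T\,e^{MT+Mh}\,\omega_x(t/n)$ is indeed uniform over $s\in[a,b]$, $t\in[0,T]$. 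What your argument buys over the citation is self-containedness plus a quantitative rate --- the error is controlled by the modulus of continuity of $\tau\mapsto A(\tau)y$ along a compact orbit --- which is precisely the kind of information the paper later uses only informally in Corollary \ref{error}; the citation buys brevity. Two small repairs would make your write-up airtight. First, the properties of $U$ you invoke from Remark \ref{solutionProperties} are stated there for $\mathbb{L}(t)$ under continuity in the uniform operator topology, whereas the present proposition assumes only strong continuity; you should instead take the cocycle law and joint strong continuity from the definition of an evolution family, and get $\partial_t U(t,s)x=A(t)U(t,s)x$ together with $\|U(t,s)\|\le e^{M(t-s)}$ from well-posedness (Theorem \ref{fattoriniTheorem}) and Gronwall applied to $U(t,s)x=x+\int_s^t A(r)U(r,s)x\,dr$. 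Second, for merely strongly continuous $A(\cdot)$ the map $\sigma\mapsto U(\tau_{k-1}+\sigma,\tau_{k-1})$ is differentiable only in the strong sense, so the Duhamel identity should be derived applied to the fixed vector $U(\tau_{k-1},s)x$ rather than as an operator identity. Neither point changes the structure or the conclusion of your proof.
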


\begin{proof}
\cite[Ch.\ III.5.9, Prop.]{EN2006}.
\end{proof}

Now we outline how to apply Proposition \ref{uniformConvergence} to approximate the solution of (\ref{nAACP}). For $0<b$, and $\varepsilon>0$, we have 
\[
||e^{t/n\mathbb{L}(t)}\cdot e^{t/n\mathbb{L}(t-t/n)}\dots e^{t/n\mathbb{L}(t/n)}u_0(x)-U(t,0)u_0(x)||_{L^2}<\varepsilon
\]
with $0 \leq t<b$, and some $n\in \mathbb{N}$. Take $0\leq t_0 <b$ fixed, hence, in order to compute the approximate (respect to the $L^2$ norm) solution we have to solve (\ref{HEAC}) by iterating the initial conditions, or in other words we have to compute $e^{t_0/n\mathbb{L}(t_0)}\cdot e^{t_0/n\mathbb{L}(t_0-t_0/n)}\dots e^{t_0/n\mathbb{L}(t_0/n)}u_0(x)$. We can view this approximation like snapshots of the system at $n\ge0$ discrete time points. 
We call this the \emph{temporal discretisation}, and we fix some notation:

\begin{enumerate}
    \item Let $\mu(t_0)$ be an eigenvalue of the matrix $L(t_0)$ and $\varphi_\mu(x,t_0)$ the associated eigenfunction.
    \item Let $\{e^{t\mathbb{L}(t_0)}\}_{t\geq 0}$ be the semigroup generated by the operator $\mathbb{L}(t_0)$ for some $t_0\geq 0$. 
    \item Let $\gamma_{I}(t_0)=\sum_{J}A_{IJ}(t_0)$
\end{enumerate}
Therefore, for a fixed $t_0$, the solution of the Cauchy problem in the autonomous case (\ref{HEAC}) associated to the operator $\mathbb{L}(t_0)$, with initial condition $u(x,0)=u_0(x)$ as in (\ref{initialCond}), is given by 
\begin{align*}
e^{t\mathbb{L}(t_0)}&u_0(x)=u(x,t)
\\
&=\sum_{\mu=\mu(t_0)\in \Spec(L(t_0))} C_{\mu}(t_0)e^{\mu(t_0) t}\varphi_{\mu}(x,t_0)
\\
&+\sum_{r,I,j} C_{j,I,r}(t_0)e^{-\gamma_I(t_0) t}\psi_{j,I,r}(x).
\end{align*}
Now it is clear that 
\begin{align}\nonumber
     e^{t_0/n\mathbb{L}(t_0/n)}u_0(x)
     &=\sum_{\mu=\mu(t_0)\in\Spec(L(t_0))} C_{\mu}(t_0/n)e^{\mu(t_0/n) t_0/n}\varphi_{\mu}(x,t_0/n)
     \\\label{expansion3}
&     +\sum_{j,I,r} C_{r,I,j}(t_0/n)e^{-\gamma_I(t_0/n) t_0/n}\psi_{j,I,r}(x).
\end{align}
The next step is to compute $e^{t_0/n\mathbb{L}(2t_0/n)}e^{t_0/n\mathbb{L}(t_0/n)}u_0(x)$. For this we need to find the solution to the Cauchy problem (\ref{HEAC}) associated to the operator $\mathbb{L}(2t_0/n)$ with initial condition $u_1(x)=e^{t_0/n\mathbb{L}(t_0/n)}u_0(x)$, and evaluate the solution $t\mapsto e^{t\mathbb{L}(2t_0/n)}e^{t_0/n\mathbb{L}(t_0/n)}u_0(x)$ in $t=t_0/n$.
\newline

Define for a given adjacency matrix $A(t)$ the corresponding weighted graph $G_{A(t)}$ whose adjacency matrix is $A(t)$. The corresponding $L^2$-space will  be denoted as $L^2(G_{A(t)},\mathbb{C})$. It is isomorphic to $\mathbb{C}^{\absolute{V(\mathcal{G})}}$.
\newline

To find the solution of the form (\ref{expansion}) we need to express the initial condition $u_1(x)=e^{t_0/n\mathbb{L}(t_0/n)}u_0(x)$ as an element of $L^2(G_{A(2t_0/n)},\mathbb{C})\oplus \mathcal{L}_0^2(K_N)$, 
that is 
\[
e^{t_0/n\mathbb{L}(t_0/n)}u_0(x)=\sum_{\mu\in \Spec(L(t_0))} B_\mu\varphi_\mu(x,2t_0/n)+\sum_{j,I,r} B_{j,I,r}\psi_{r,I,j}(x).
\]
Then, after computing the coefficients $B_\mu$ and $B_{j,I,r}$ the desired solution will be given by 
\begin{equation}
    \begin{split}
        e^{t\mathbb{L}(2t_0/n)}&e^{t_0/n\mathbb{L}(t_0/n)}u_0(x)= u_1(x,t)
        \\\label{expansion4}
        &\hspace*{-.5cm}=\sum_{\mu\in\Spec(L(t_0))} B_{\mu}e^{\mu(2t_0/n) t}\varphi_{\mu}(x,2t_0/n)+\sum_{j,I,r} B_{j,I,r}e^{-\gamma_I(2t_0/n) t}\psi_{j,I,r}(x).
    \end{split}
\end{equation}
Comparing (\ref{expansion3}) and (\ref{expansion4}), it is clear that $B_{j,I,r}=C_{j,I,r}e^{-\gamma_I(t_0/n)t_0/n}$. This is due to the fact that in decomposition (\ref{orthoDecomp}) the space $\mathcal{L}_0^{2}(K_N)$ remains constant in time. Now we need to compute the constants $B_\mu$. For this, note that 
\[
\omega:=\sum_{\mu\in\Spec(L(t_0))} B_\mu\varphi_\mu(x,2t_0/n)=\sum_{\mu\in\Spec(L(t_0))} C_{\mu}e^{\mu(t_0/n) t_0/n}\varphi_{\mu}(x,t_0/n)
\]
Note that  for each $s\geq 0$, the set 
\[
\mathset{\varphi_\mu(x,s)=\varphi_{\mu(s)}(x)\mid\mu(s)\in\Spec(L(s))}
\]
is a basis for the finite dimensional space $X_N$. Hence, the coefficients $B_\mu$ are given by the basis change matrix from the basis $\{\varphi_{\mu}(x,t_0/n)\}$ to the basis $\{\varphi_\mu(x,2t_0/n)\}$. The basis change matrix $M=[M_{\lambda,\mu}]$ is given by the following equalities: 
\[
\varphi_{\mu}(x,2t_0/n)=\sum_{\lambda\in V(\mathcal{G})}M_{\lambda,\mu}\varphi_{\lambda}(x,t_0/n).
\]
In general, the basis change matrix from the basis $\{\varphi_\mu(x,s)\}$ to the basis $\{\varphi_\mu(x,t)\}$ $M$ will be given by 
\[
M=M(s)^{-1}M(t),
\]
where the matrices $M(\tau)$ are the \textit{modal} matrices of the Laplacian matrix, that is, the $\mu$-column of $M(\tau)$ is given by the eigenvector $[\varphi_\mu(K,\tau)]_{K\in V(\mathcal{G})}$. Then, it is clear that the coefficients $B_\mu$ are given by
\[
[B_\mu]_{\mu}=M^{-1}[C_{\mu}e^{\mu(t_0/n) t_0/n}]_{\mu}.
\]
Thus,
\[
[B_\mu]_{\mu}=M(2t_0/n)^{-1}M(t_0/n)\diag(e^{\mu(t_0/n) t_0/n})[C_{\mu}]_{\mu}
\]
and 
\[
B_{j,I,r}=C_{j,I,r}e^{-\gamma_I(t_0/n)t_0/n}.
\]
The process continues by solving the Cauchy problem for the operator $\mathbb{L}(3t_0/n)$ with initial condition $e^{t_0/n\mathbb{L}(2t_0/n)}e^{t/n\mathbb{L}(t_0/n)}u_0(x)$. In order to do so, we repeat the process described above. That is we take as initial condition 
\begin{align*}
u_1(x,t_0/n)&=\sum_{\mu\in\Spec(L(t_0))} B_{\mu}e^{\mu(2t_0/n) t_0/n}\varphi_{\mu}(x,2t_0/n)
\\
&+\sum_{j,I,r} C_{j,I,r}e^{-\gamma_I(t_0/n)t_0/n-\gamma_I(2t_0/n) t}\psi_{j,I,r}(x)
\end{align*}
and express it as an element of $L^2(G_{A(3t_0/n)},\mathbb{C})\oplus \mathcal{L}_0^2(K_N)$.
Now,
\[
u_1(x,t_0/n)=\sum_{\mu\in\Spec(L(t_0))} D_\mu\varphi_\mu(x,3t_0/n)+\sum_{j,I,r} D_{j,I,r}\psi_{r,I,j}(x).
\]
Again, we obtain the coefficients $D_\mu$ and $D_{j,I,r}$ by:
\[
[D_\mu]_\mu=M(3t_0/n)^{-1}M(2t_0/n)\diag(e^{\mu(2t_0/n)t_0/n})[B_\mu]_\mu
\]
and 
\[
D_{j,I,r}=B_{j,I,r}e^{-\gamma_I(2t_0/n)t_0/n}.
\]
We can simplify these expressions by putting the value of $B_\mu$ and $B_{j,I,r}$ in the above equalities, and we get 
\[
[D_\mu]_\mu=M(3t_0/n)^{-1}M(t_0/n)\diag(e^{\mu(t_0/n)+\mu_I(2t_0/n)})[C_\mu]_\mu
\]
and 
\[
D_{j,I,r}=C_{j,I,r}e^{-\gamma_I(t_0/n)t_0/n-\gamma_I(2t_0/n) t}.
\]
In the end, we obtain the function $e^{t_0/n\mathbb{L}(t_0)}\dots e^{t_0/n\mathbb{L}(t_0/n)}u_0(x)\in L^2(K_N)$, of the form 
\begin{equation*}
    \begin{split}
        e^{t_0/n\mathbb{L}(t_0)}&\cdot e^{t_0/n\mathbb{L}(t_0-t_0/n)}\dots e^{t_0/n\mathbb{L}(t_0/n)}u_0(x)=\sum_{\mu\in\Spec(L(t_0))}Q_\mu(n,t_0)\varphi_\mu(x,t_0)\\&+\sum_{\supp(\Psi_{j,I,r})\subset K_N}C_{r,I,j}(t_0/n)e^{-\frac{t_0}{n}\sum_{k=1}^n\gamma_I(k\frac{t_0}{n})}\Psi_{j,I,r}(x),
    \end{split}
\end{equation*}
where the coefficients $Q_\mu(n,t_0)$ are given by the equation
\[
[Q_\mu(n,t_0)]_\mu=M(t_0)^{-1}M(t_0/n)\diag(e^{\frac{t_0}{n}\sum_{k=1}^{n}\mu(k\frac{t_0}{n})})[C_\mu(t_0/n)]_\mu
\]
For simplicity, we rename the approximation above: 
\begin{align*}
U_n(t,0)u_0(x)&=\sum_{\mu\in\Spec(L(t_0))}Q_\mu(n,t)\varphi_\mu(x,t)
\\
&+\sum_{\supp(\Psi_{j,I,r})\subset K_N}C_{r,I,j}(t/n)e^{-\frac{t}{n}\sum_{k=1}^n\gamma_I(k\frac{t}{n})}\Psi_{j,I,r}(x)
\end{align*}
Then, for a general initial condition $s$, we obtain: 
\begin{align*}
U_n(t,s)u_0(x)&=\sum_{\mu\in\Spec(L(t_0))}Q_\mu(n,t,s)\varphi_\mu(x,t_0)
\\
&+\sum_{\supp(\Psi_{j,I,r})\subset K_N}C_{r,I,j}(s+\frac{t-s}{n})e^{-\frac{t-s}{n}\sum_{k=1}^n\gamma_I(s+k\frac{t-s}{n})}\Psi_{j,I,r}(x),
\end{align*}
where the coefficients $Q_\mu(n,t,s)$ are computed by the following equation: 
\[
[Q_\mu(n,t,s)]=M(t)^{-1}M\left(s+\frac{t-s}{n}\right) \diag\left(e^{\frac{t-s}{n}\sum_{k=1}^{n}\mu(s+k\frac{t-s}{n})}\right)[C_\mu(s+\frac{t-s}{n})]_\mu
\]
where the numbers $C_\mu(s+\frac{t-s}{n})$ are the Fourier coefficients of the initial condition at time $s+\frac{t-s}{n}$. Note that in $L^2(K_N)$ we have the following convergence:
\[
\lim_{n\rightarrow\infty}U_n(t,s)u_0(x)=U(t,s)u_0(x),
\]
and we can expand the function $U(t,s)u_0(x)\in L^2(K_N)$ by its Fourier series using the decomposition $L^2(G_{A(t_0)},\mathbb{C})\oplus \mathcal{L}_0^2(K_N)$:
\[
U(t,s)u_0(x)=\sum_{\mu\in\Spec(L(t_0))}Q_\mu(t,s)\varphi_\mu(x,t_0)+\sum_{\supp(\Psi_{j,I,r})\subset K_N}Q_{r,I,j}\Psi_{j,I,r}(x).
\]
Hence, by Parseval's equality we obtain:
\[
\lim_{n\rightarrow\infty} Q_\mu(n,t,s)=Q_\mu(t,s)
\]
and 
\[
\lim_{n\rightarrow\infty}C_{r,I,j}\left(s+\frac{t-s}{n}\right)e^{-\frac{t-s}{n}\sum_{k=1}^n\gamma_I(s+k\frac{t-s}{n})}=C_{r,I,j}(s) e^{-\int_{s}^{t}\gamma_I(\tau)d\tau},
\]
where the last equality occurs by the continuity of $t\mapsto \gamma_I(t)$. On the other hand, the first limit  is more complicated, because it depends on the continuity of the matrix function $t\mapsto M(t)$, more specifically, on the continuity of the eigenvectors: $t\mapsto \varphi_\mu(t)$, where the vector $\varphi_\mu=[\varphi_\mu(K,t)]_K$ is the eigenvector of the Laplacian matrix $L(t)$ associated to $\mu(t)\in\Spec(L(t))$. \newline

Let $t\mapsto B(t)$ be a  matrix function. If $B(t)$ is real symmetric, then we can find an orthonormal basis $\{\varphi_\mu(t)\}_\mu$  of $\mathbb{C}^N$ formed by eigenvectors of $B(t)$. It can be shown that if the matrix $B(t)$ depends analytically on $t$ inside a real interval, then we can chose an orthonormal basis $\{\varphi_\mu(t)\}_\mu$ that depends analytically on $t$
\cite[Ch.\ II.4]{Kato1980}, 
and in particular, continuously on $t$. If  the analyticity assumption is deleted, the eigenvectors may not depend continuously on $t$. Therefore, if we suppose that the adjacency matrix $A(t)$ depends analytically on $t$, then we have the following:

\begin{equation*}
    \begin{split}        \lim_{n\rightarrow \infty} [Q_\mu(n,t,s)]_\mu&=\lim_{n\rightarrow \infty}M(t)^{-1}M\left(s+\frac{t-s}{n}\right) 
        \\
        &\cdot \diag\left(e^{\frac{t-s}{n}\sum_{k=1}^{n}\mu(s+k\frac{t-s}{n})}\right)\left[C_I\left(s+\frac{t-s}{n}\right)\right]_I\\
        &=M(t)^{-1}M(s)\diag\left(e^{\int_{s}^{t} \mu(\tau)d\tau}\right)[C_\mu(s)]_\mu=[Q_\mu(t,s)]_\mu.
    \end{split}
\end{equation*}
But, in order to have the above limit, it is sufficient to suppose that the eigenvectors $\varphi_\mu(t)$ are right continuous at the initial condition $s\ge t_0$. 
\newline

This proves the following:

\begin{teo}\label{CauchyProblem}
If the eigenvectors of the Laplacian matrices $L(t)$ are right continuous at some $s\geq0$, then the evolution family of the Cauchy problem (\ref{nAACP}) (with initial condition at $s$) is given by 
\begin{align*}
U(t,s)u_0(x)&=\sum_{\mu\in \Spec(L(t_0))}Q_\mu(t,s)\varphi_\mu(x,t)
\\
&+\sum_{\supp(\Psi_{j,I,r})\subset K_N} C_{r,I,j}(s) e^{-\int_{s}^{t}\gamma_I(\tau)d\tau} \Psi_{j,I,r}(x),
\end{align*}
where $t\ge s$,
\[
[Q_\mu(t,s)]=M(t)^{-1}M(s)\diag\left(e^{\int_{s}^{t} \mu(\tau)d\tau}\right)[C_\mu(s)]_\mu\,,
\]
and the coefficients $C_\mu(s)$ and $C_{r,I,j}(s)$ are uniquely determined by the initial condition
\[
u_0(x)=\sum_{\mu\in\Spec(L(t_0))}C_\mu(s)\varphi_\mu(x,s)+\sum_{\supp(\Psi_{j,I,r})\subset K_N} C_{r,I,j}(s)  \Psi_{j,I,r}(x).
\]
In particular, this result holds for all $s\geq 0$ if the adjacency matrix $A(t)$ depends analytically on $t$.
\end{teo}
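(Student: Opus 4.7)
The plan is to assemble the proof from the ingredients developed in the preceding discussion: apply the Trotter product formula of Proposition \ref{uniformConvergence} to the strongly continuous family $\mathbb{L}(t)$ (whose strong continuity is guaranteed by Theorem \ref{stronglyContinuous}), compute the Trotter products $U_n(t,s)u_0$ in closed form using the spectral decomposition from Theorem \ref{somethingKnown} at each discrete time step, and then pass to the limit $n\to\infty$ coefficient by coefficient. Proposition \ref{uniformConvergence} gives $L^2$-convergence of $U_n(t,s)u_0$ to $U(t,s)u_0$, so the task reduces to identifying the limiting Fourier coefficients in the decomposition $L^2(K_N,\mathbb{C})=X_N\oplus\mathcal{L}_0^2(K_N)$.

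First I would set up the iteration: fix $s\le t$ and apply the time-independent spectral decomposition of $\mathbb{L}(s+k(t-s)/n)$ at the $k$-th step. Since the Kozyrev wavelets $\Psi_{j,I,r}$ form a time-independent orthonormal basis of $\mathcal{L}_0^2(K_N)$ that diagonalises every $\mathbb{L}(\tau)$ with eigenvalues $-\gamma_I(\tau)$ (Theorem \ref{somethingKnown}), their coefficient at the $k$-th iterate is simply multiplied by $e^{-\frac{t-s}{n}\gamma_I(s+k(t-s)/n)}$. After $n$ steps their total factor is the Riemann sum $e^{-\frac{t-s}{n}\sum_{k=1}^{n}\gamma_I(s+k(t-s)/n)}$. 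On the $X_N$-side, the eigenfunctions $\varphi_\mu(x,\tau)$ do depend on $\tau$, so moving from step $k$ to step $k+1$ requires a change-of-basis by $M(s+(k+1)(t-s)/n)^{-1}M(s+k(t-s)/n)$ conjugated by the diagonal matrix $\diag(e^{\frac{t-s}{n}\mu(s+k(t-s)/n)})$. Iterating yields precisely the formula
\[
[Q_\mu(n,t,s)]_\mu=M(t)^{-1}M\!\left(s+\tfrac{t-s}{n}\right)\diag\!\left(e^{\frac{t-s}{n}\sum_{k=1}^{n}\mu(s+k\frac{t-s}{n})}\right)\!\left[C_\mu\!\left(s+\tfrac{t-s}{n}\right)\right]_\mu
\]
derived in the excerpt.

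Next I would take $n\to\infty$. For the wavelet coefficients, continuity of $\tau\mapsto\gamma_I(\tau)$ (which follows from uniform continuity of $t\mapsto A_{I,J}(t)$) turns the Riemann sum into the integral $\int_s^t\gamma_I(\tau)\,d\tau$, producing the claimed factor. For the $Q_\mu$ block, the diagonal exponentials and the Fourier coefficients $C_\mu(s+(t-s)/n)$ converge by the same type of continuity argument, so the only issue is passing to the limit in $M(s+(t-s)/n)$. This is where the hypothesis enters: right continuity of the eigenvectors $\varphi_\mu(\cdot,\tau)$ at $\tau=s$ yields $M(s+(t-s)/n)\to M(s)$ as $n\to\infty$, producing the formula for $Q_\mu(t,s)$. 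Parseval's identity applied to the $L^2$-limit $U_n(t,s)u_0\to U(t,s)u_0$ identifies these limits with the genuine Fourier coefficients of $U(t,s)u_0$.

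The hard part is precisely the continuity of $t\mapsto M(t)$, which is neither automatic nor implied by continuity of $A(t)$. This obstruction is absorbed into the hypothesis by demanding right continuity of the eigenvectors at $s$; for the final assertion I would invoke Kato's perturbation theorem \cite[Ch.\ II.4]{Kato1980}, which guarantees that if $A(t)$ depends analytically on $t$, then an orthonormal system of eigenvectors of the symmetric Laplacian $L(t)$ can be chosen to depend analytically (hence continuously) on $t$, so the hypothesis holds at every $s\geq 0$. Combining these steps yields the closed-form expression for $U(t,s)u_0$ claimed in the theorem.
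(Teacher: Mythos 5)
Your proposal is correct and follows essentially the same route as the paper: Trotter products via Proposition \ref{uniformConvergence}, step-by-step spectral computation using Theorem \ref{somethingKnown} with the time-independent Kozyrev block accumulating Riemann sums and the $X_N$ block handled by the modal change-of-basis matrices $M(\cdot)$, then passage to the limit via Parseval, right continuity of the eigenvectors at $s$, and Kato's analytic perturbation theory for the final assertion. This is precisely the derivation the paper gives in the discussion preceding the theorem statement.
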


\begin{rem}
Note that this is consistent with the autonomous problem, that is, when $L(t)=L$ is constant. In this case, the matrices $M(t)$ and the functions $t\mapsto\mu(t)$, $t\mapsto \varphi_\mu(t)$ are constant, thus the solution is given by 
\begin{align*}
U(t,s)u_0(x)&=\sum_{\mu\in\Spec(L)}C_\mu(s)e^{(t-s)\mu}
\\
&+\sum_{\supp(\Psi_{j,I,r})\subset K_N} C_{r,I,j}(s) e^{-(t-s)\gamma_I} \Psi_{j,I,r}(x) 
\end{align*}
for $t\ge s\ge0$.
\end{rem}

\begin{cor}\label{error}
Assuming analyticity of $L(t)$, the first-order approximation error in the temporal discretisation is $O\left(\frac{1}{n}e^{\frac{1}{n}}\right)$.
\end{cor}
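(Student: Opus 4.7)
The plan is to compare, term by term in the orthogonal decomposition $L^2(K_N,\mathbb{C})=X_N\oplus\mathcal{L}_0^2(K_N)$, the exact evolution family $U(t,s)u_0$ given in Theorem \ref{CauchyProblem} with the $n$-step Trotter approximation $U_n(t,s)u_0$ constructed just before that theorem, and to estimate each coefficient-wise error under the standing analyticity hypothesis on $L(t)$. Because both expansions are diagonal in the same bases (the Kozyrev wavelets $\Psi_{j,I,r}$ and the modal vectors $\varphi_\mu(\cdot,t)$), the overall $L^2$-error will follow by Parseval once mode-wise bounds are in place.

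On the Kozyrev component, the contribution of each $\Psi_{j,I,r}$ to $U_n(t,s)u_0-U(t,s)u_0$ is
\[
C_{r,I,j}\!\left(s+\tfrac{t-s}{n}\right)e^{-\frac{t-s}{n}\sum_{k=1}^{n}\gamma_I(s+k\frac{t-s}{n})}
-C_{r,I,j}(s)\,e^{-\int_{s}^{t}\gamma_I(\tau)d\tau}.
\]
Two sources of error appear. First, analyticity of $A(t)$ (hence of the Fourier coefficient $C_{r,I,j}(\cdot)$) gives $C_{r,I,j}(s+(t-s)/n)-C_{r,I,j}(s)=O(1/n)$ by Taylor expansion. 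Second, the right-endpoint Riemann sum obeys the standard quadrature bound
\[
\left|\tfrac{t-s}{n}\sum_{k=1}^{n}\gamma_I\!\left(s+k\tfrac{t-s}{n}\right)-\int_{s}^{t}\gamma_I(\tau)d\tau\right|
\le \tfrac{(t-s)^{2}}{2n}\sup_{\tau\in[s,t]}|\gamma_I'(\tau)|.
\]
Applying $|e^{-a}-e^{-b}|\le e^{\max(|a|,|b|)}|a-b|$ to the two exponentials, the ``$e^{\max}$'' factor produces the $e^{1/n}$ modulation beyond the $1/n$ quadrature bound, while $u_0\in L^2(K_N,\mathbb{C})$ secures $\ell^2$-summability of the $C_{r,I,j}(s)$ so that Parseval preserves the order $\frac{1}{n}e^{1/n}$ after summation.

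On the spectral component $X_N$ one compares the vectors $[Q_\mu(n,t,s)]_\mu$ and $[Q_\mu(t,s)]_\mu$ given explicitly as products of $M(t)^{-1}$, a modal matrix at the near-endpoint, a diagonal exponential, and the initial-coefficient vector. Analyticity of $A(t)$ together with the cited Kato (Ch.\ II.4) result makes $t\mapsto M(t)$ analytic, hence Lipschitz on the compact interval $[s,t]$, so $\|M(s+(t-s)/n)-M(s)\|=O(1/n)$; the same Taylor argument gives $\|[C_\mu(s+(t-s)/n)]-[C_\mu(s)]\|=O(1/n)$; and the diagonal exponential is handled exactly as in the Kozyrev estimate, producing an $O(\frac{1}{n}e^{1/n})$ term. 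A triangle inequality across the matrix product, together with the fact that $\|M(t)^{-1}\|$ is bounded on the compact interval (the analytic determinant being non-vanishing, after restricting to a sub-interval if necessary), delivers the same order.

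The main technical obstacle is the combined bookkeeping of the exponential factor: the exponents $\tfrac{t-s}{n}\sum_k\mu(\ldots)$ and $\int_s^t\mu(\tau)d\tau$ differ by $O(1/n)$, so the mean-value estimate for the exponential is sharp, and the $e^{1/n}$ arises precisely from the $e^{\max(|a|,|b|)}$-factor applied to exponents whose difference decays only like $1/n$. A secondary but necessary check is uniformity over the countable family of Kozyrev modes, which follows because Parseval expresses the squared $L^2$-error as a sum of mode-wise squares, each controlled by $\frac{1}{n}e^{1/n}$ times $|C_{r,I,j}(s)|^2$, and these are summable by $u_0\in L^2(K_N,\mathbb{C})$. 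This yields the claimed $O\!\left(\tfrac{1}{n}e^{\tfrac{1}{n}}\right)$ bound.
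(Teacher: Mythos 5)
Your proposal is correct and follows essentially the same route as the paper's own (much terser) proof: both estimate the error coefficient-wise in the decomposition $X_N\oplus\mathcal{L}_0^2(K_N)$, combining the $O(1/n)$ first-order/Taylor errors in the eigenvalues, eigenvectors and coefficients with the $e^{1/n}$-type factor arising from the Riemann-sum approximation of the exponential integrals, and taking the product; you simply supply the details (quadrature bound, mean-value inequality, Kato analyticity, Parseval) that the paper leaves implicit. One cosmetic caveat: in your bound $\left|e^{-a}-e^{-b}\right|\le e^{\max(|a|,|b|)}|a-b|$ the factor $e^{\max(|a|,|b|)}$ is a bounded constant of order $e^{\int_s^t\gamma_I(\tau)\,d\tau}$ times $e^{O(1/n)}$, rather than itself being the source of the $e^{1/n}$ modulation, but this does not affect the claimed $O\left(\frac{1}{n}e^{\frac{1}{n}}\right)$ estimate.
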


\begin{proof}
For the temporal discretisation, observe that by replacing each analytic function with its first order approximation leads to 
an error of $O(1/n)$ in each entry of the eigenvectors and eigenvalues of $L(t)$, and an error of $O(e^{1/n})$
in each exponential integral, because the temporal discretisation is in this case a Riemann sum. Hence, the
error in each coefficient of the solution is given by the 
product error, and this is in first order $O\left(\frac{1}{n} e^{\frac{1}{n}}\right)$, which proves the assertion. 
\end{proof}

%\begin{exm}\label{exa1}
%Consider the complete graph %$K_3$ with three nodes %$1,2,3$, and with weights 
%\[
%\{(1,2)=1,(2,3)=1, (3,1)=f(t)\}.
%\]
%for some continuous function $f(t)$. The eigenvalues of the Laplacian matrix are: $\mu_1=0,\mu_2=-3,\mu_3=-2f(t)-1$. And the corresponding eigenvectors are: $\varphi_1=(1,1,1)$, $\varphi_2=(-1/2,-1/2,1)$ and $\varphi_3=(-1,1,0)$. Then, the evolution family for a given initial condition $u_0(x)$ is 
%\begin{equation*}
%    \begin{split}
%        U(t,s)u_0(x)&=C_1(s)\varphi_1(x)+C_2(s)e^{-3(t-s)}\varphi_2(x)+C_3(s)e^{\int_{s}^{t}f(\tau)d\tau}\varphi_3(x)\\&+\sum C_{r,1,j}(s)e^{-\int_{s}^{t}(1+f(\tau))d\tau}\Psi_{j,1,r}(x)
%        \\
% &       +\sum C_{r,2,j}(s)e^{-\int_{s}^{t}(1+f(\tau))d\tau}\Psi_{j,2,r}(x)\\
%        &+\sum C_{r,3,j}(s)e^{-2(t-s)}\Psi_{j,3,r}(x)
%    \end{split}
%\end{equation*}
%for $t\ge s\ge0$.
%
%\ToDo{
%
%- What does this example intend to show?
%
%- $\varphi_2$ is not an eigenvector of $L$
%
%- $\varphi_3$ is not an %eigenvector of $L$
%
%- are the indices of $C_{r,I,j}$ correct?
%
%- Would it not be better to %discard this example?
%}
%\end{exm}

%%%%%%%%%%%%%%%%%
\section{Inhomogeneous Markov processes and Feller evolution.}

%%%%%%%%%%%%%%%
\subsection{Reviewing inhomogeneous Markov processes}
In this section we review some definitions and results necessary to prove that the non-autonomous abstract Cauchy problem (\ref{nAACP}) has attached to it an inhomogeneous Markov process. 
\newline

In the sequel $E$ denotes a separable complete metrizable topological Hausdorff space. In other words $E$ is a Polish space. The space $C_b(E)$ is the space of all complex valued bounded continuous functions. The space $C_b(E)$ is equipped with the uniform norm: $||\cdot||_{\infty}$, and the strict topology $\mathcal{T}_\beta$.
%(see [ref]).
It is considered as a subspace of the bounded Borel measurable functions $L^{\infty}(E)$, also endowed with the supremum norm. \newline

Let $(\Omega,\mathcal{F})$ be a measurable space. A filtration $\mathcal{F}_t$, $t\in [0,T]$, is a family of sub-$\sigma$-algebras of $\mathcal{F}$ such that if $0\leq t_1 \leq t_2 \leq T$, then $\mathcal{F}_{t_1}\subset \mathcal{F}_{t_2}$. A two-parameter filtration $\mathcal{F}_t^{\tau}$, $0\leq \tau \leq t \leq T$, is a family of sub-$\sigma$-algebras of $\mathcal{F}$ that is increasing in $t$ and decreasing in $\tau$. Let $X_s$, $s\in [0,T]$ be a stochastic process, with state space $(E, \mathcal{E})$. The $\sigma$-algebra $\sigma(X_s:0\leq s\leq t)$ is the smallest $\sigma$-algebra such that all the state variables $X_s$ with $0\leq s\leq t$ are measurable.
%(see [ref]).
The process $X_s$, $s\in [0,T]$, generates the following two-parameter filtration: 
\[
\mathcal{F}_t^{\tau}=\sigma(X_s:\tau\leq s \leq t)
\]
with $0\leq \tau \leq t \leq T$. 
\newline

We recall some definitions:

\begin{defi}
%([ref])
A transition probability funcition $P(r,x;s,A)$, where $0\leq r < s \leq T$, $x\in E$, and $A\in \mathcal{E}$, is a nonnegative function for which the following condition hold: 
\begin{itemize}
    \item For fixed $r,s$ and $A$, $P$ is a nonnegative Borel measurable function on $E$. 
    \item For fixed $r,s$ and $x$, $P$ is a Borel measure on $\mathcal{E}$. 
    \item $P(r,x;s,E)=1$ for all $r,s$ and $x$. 
    \item $P(r,x;s,A)=\int_{E}P(r,x;y,dy)P(u,y;s,A)$ for all $r<u<s$, and $A$. 
\end{itemize}
\end{defi}
The number $P(r,x;s,A)$ can be interpreted as the probability of the following event: The random system located at $x\in E$ at time $r$ hits the target $A\subset E$ at time $s$. 

\begin{defi}
%([ref])
The process
\[
\{
(\Omega,\mathcal{F}_T^{\tau},\mathbb{P}_{\tau,x}), (X(t):T\geq t\geq 0),(E,\mathcal{E})\}
\]
is called a time-inhomogeneous Markov process if 
\[
\mathbb{E}_{\tau,x}\left[f(X(t))|\mathcal{F}_s^{\tau}\right]=\mathbb{E}_{s,X(s)}[f(X(t))]
\]
$\mathbb{P}_{\tau,x}$-almost surely. Here $f$ is a bounded Borel measurable function defined on the state space $E$ and $\tau\leq s \leq t \leq T$. 

\end{defi}
Let $P$ be a transition probability function. It is possible to construct a filtered measurable space $(\Omega, \mathcal{F}, \mathcal{F}_t^{\tau})$, a family of probability measures $\mathbb{P}_{\tau,x}$, $x\in E$, $\tau \in [0,T]$, on the space $(\Omega, \mathcal{F}_T^{\tau})$, and a Markov process $X_t$, $t\in [0,T]$ such that 
\[
\mathcal{F}_t^{\tau}=\sigma(X_s:\tau \leq s \leq t)
\]
with $0\leq \tau \leq t \leq T$ and 
$$\mathbb{P}_{\tau,x}(X_t\in A)=P(\tau,x;t,A)$$
with $0\leq \tau \leq t \leq T, $ $A\in \mathcal{E}$.
%(See [ref]).

\begin{defi}
%([ref])
\label{FellerEvolution}
A family $\{P(s,t):0\leq s \leq t \leq T\}$ of operators defined on $L^{\infty}(E)$ is called a \textbf{Feller Evolution} or \textbf{Feller propagator} on $C_b(E)$ if it possesses the following properties: 
\begin{enumerate}
    \item It leaves $C_b(E)$ invariant: $P(s,t)C_b(E)\subset C_b(E)$ for $0\leq s\leq t \leq T$;
    \item It is an evolution: $P(\tau,t)=P(\tau,s)P(s,\tau)$ for all $\tau, s, t$ for which $0\leq \tau \leq s \leq t$ and $P(t,t)=I$, $t\in [0,T]$;
    \item If $0\leq f \leq 1$, $f\in C_b(E)$, then $0\leq P(s,t)f \leq 1$, for $0\leq s \leq t\leq T$. 
    \item If the function $(s,t,x)\mapsto P(s,t)f(x)$ is continuous on the space $\Lambda:=\{(s,t,x)\in [0,T]\times[0,T]\times E:s\leq t\}$. 
\end{enumerate}
\end{defi}
In the following theorem we see that with a Feller evolution a strong Markov process can be associated in such a way that the one-dimensional distributions or marginals are determined by the operators $f\mapsto P(\tau,t)f$, $f\in C_b(E)$. In fact every operator $P(\tau,t)$ can be written as 
\[
P(\tau,t)f(x)=\int P(\tau,x;t,dy)f(y)
\]
for $f\in C_b(E)$. Where the mapping 
\[
(\tau,x,t,B)\mapsto P(\tau,x;t,B)
\]
with $(\tau,x,t,B)\in [0,T]\times E\times [0,T]\times \mathcal{E}$, $\tau \leq t$, is a sub-probability transition function. 

\begin{teo}
%([ref])
\label{nAHunt}
Let $\{P(\tau,t):\tau\leq t \leq T\}$ be a Feller evolution in $C_b(E)$. Then there exists a strong Markov process (in fact a Hunt Process) such that $[P(\tau,t)f](X)=\mathbb{E}_{\tau,x}[f(X(t))]$, $f\in C_b(E)$, $t\geq0$. Moreover, this Markov process possesses the properties in \cite[Thm.\ 2.9.]{vanCasteren2011}. 
\end{teo}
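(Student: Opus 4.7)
The plan is to follow the classical Hunt-process construction adapted to the non-autonomous setting, which is carried out in detail in \cite{vanCasteren2011}. The first step is to convert the operator family $\{P(\tau,t)\}$ into a family of sub-probability kernels. For every $\tau\le t$ and every $x\in E$, the functional $f\mapsto [P(\tau,t)f](x)$ is positive and linear on $C_b(E)$ with norm at most $1$ by property (3) of Definition \ref{FellerEvolution}, hence by the Riesz representation theorem it is given by integration against a Borel sub-probability measure $P(\tau,x;t,\cdot)$ on $E$. Property (2) translates into the Chapman-Kolmogorov identity for this kernel, and property (4) gives joint Borel measurability of $(\tau,x,t)\mapsto P(\tau,x;t,B)$ for every $B\in\mathcal{E}$.

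The second step is to apply Kolmogorov's extension theorem on the product space $\Omega=E^{[0,T]}$ to construct, for each starting pair $(\tau,x)$, a probability measure $\mathbb{P}_{\tau,x}$ under which the coordinate process $X(t)$ has finite-dimensional distributions dictated by iterated Chapman-Kolmogorov integrals. The Markov property with respect to the two-parameter filtration $\mathcal{F}_t^{\tau}=\sigma(X(s):\tau\le s\le t)$ is then immediate from the product structure, and it yields $[P(\tau,t)f](x)=\mathbb{E}_{\tau,x}[f(X(t))]$ for every $f\in C_b(E)$.

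The third, and technically most demanding, step is to replace the canonical process by a càdlàg modification $\tilde X$. For each $f\in C_b(E)$ the process $t\mapsto[P(t,T)f](X(t))$ is a bounded $(\mathcal{F}_t^{\tau},\mathbb{P}_{\tau,x})$-martingale, so Doob's regularization yields left and right limits along rationals almost surely. The joint continuity of $(s,t,x)\mapsto[P(s,t)f](x)$, combined with the fact that $C_b(E)$ separates points on the Polish space $E$, then allows one to patch these limits into a genuine càdlàg $E$-valued modification. Once $\tilde X$ is in hand, the strong Markov property follows by approximating each stopping time from the right through dyadic stopping times and invoking property (4) together with bounded convergence.

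The final step is to verify quasi-left-continuity: for stopping times $T_n\uparrow T<\infty$ one needs $\tilde X(T_n)\to\tilde X(T)$ almost surely, which again reduces to the joint continuity in property (4), applied to the left limits that $\tilde X$ enjoys by construction. Together with the strong Markov property this makes $\tilde X$ a Hunt process, and the remaining properties listed in \cite[Thm.\ 2.9.]{vanCasteren2011} follow from the same construction. The main obstacle I anticipate is precisely the third step: in the non-autonomous setting the standard Feller-semigroup regularization theorem does not apply directly, and the cleanest workaround is the space-time trick of considering the homogeneous Markov process $(t,X(t))$ on $[0,T]\times E$, whose transition semigroup is $C_b$-strongly continuous by property (4), and then projecting the resulting regularization back to $E$.
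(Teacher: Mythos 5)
Your proposal is correct and takes essentially the same route as the paper: the paper's entire proof is the citation \cite[Thm.\ 2.9]{vanCasteren2011}, and your outline (sub-probability kernels via Riesz representation, Kolmogorov extension, c\`adl\`ag regularization via the space-time homogenization trick, then strong Markov property and quasi-left-continuity) is precisely the construction carried out in that reference, which you yourself invoke. The one caveat worth recording is that on a non-compact Polish space the Riesz step needs tightness of the representing functionals---this is exactly what the strict topology $\mathcal{T}_\beta$ in the paper's setup is for---though in the paper's application $E=K_N$ is compact, so $C_b(E)=C(E)$ and the issue disappears.
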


\begin{proof}
\cite[Thm.\ 2.9]{vanCasteren2011}.
\end{proof}

%%%%%%%%%%%%%%%5
\subsection{Stochastic process attached to a non-autonomous equation}

In this section we prove that the system 
\begin{equation}\label{nAsyst}
    \begin{cases}
      \frac{\partial u}{\partial t} (x,t)=\mathbb{L}(t)u(x,t)\\
      u(x,s)=u_s(x)\in C(K_n,\mathbb{C})
    \end{cases}
\end{equation}
generates a time-inhomogeneous Markov process, where the operators $\mathbb{L}(t)$ satisfies the hypothesis of Theorem \ref{stronglyContinuous}. In more generality, we study the following Cauchy problem 
\begin{equation}\label{nACP_stoch}
    \begin{cases}
      \frac{\partial u}{\partial t} (x,t)=A(t)u(x,t)\\
      u(x,s)=u_s(x)\in C(E)\,,
    \end{cases}
\end{equation}
where $E$ is a compact Polish space. Thus, $C_b(E)=C(E)$. Also, we suppose that $t\mapsto A(t)$ is continuous in the uniform norm topology (In particular $A(t)$ is bounded for all $t\geq 0$). Moreover, we suppose that for every $\tau\geq 0$, $A(\tau)$ generates a strongly continuous, positive, contraction semigroup $\{e^{tA(\tau)}\}_{t \geq 0}$ on $C(E)$. By Theorem \ref{fattoriniTheorem},
the system (\ref{nACP_stoch}) 
is well-posed for every initial condition $u_s(x)\in C(E)$. Therefore, there exists an evolution family $U(t,s)$, such that $U(t,s)u_s(x)$ is the unique solution of the Cauchy problem (\ref{nACP_stoch}). 
Moreover, $U(t,s)$ satisfies the properties in Remark \ref{solutionProperties}.

\begin{lem}\label{Feller}
The evolution $U(t,s)$ described above is a Feller evolution.
\end{lem}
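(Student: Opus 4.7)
The plan is to verify the four defining properties of a Feller evolution (Definition \ref{FellerEvolution}) for the operators $P(s,t) := U(t,s)$, $s \le t$. The essential tools are: (a) the Trotter product approximation of Proposition \ref{uniformConvergence}, (b) the standing assumption that each $e^{\tau A(\sigma)}$ is a positive contraction semigroup on $C(E)$, and (c) the continuity of $(s,t) \mapsto U(t,s)$ in the uniform operator topology recorded in Remark \ref{solutionProperties}.

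First, I would dispatch invariance of $C(E)$ (property 1) and the sub-Markov bound (property 3) simultaneously. By Proposition \ref{uniformConvergence}, applied with Banach space $X = C(E)$,
\[
U(t,s)f = \lim_{n\to\infty}\prod_{k=1}^{n} e^{\frac{t-s}{n}\, A(s + k\frac{t-s}{n})}\, f
\]
in $\|\cdot\|_\infty$, for every $f \in C(E)$. Each factor sends $C(E)$ into $C(E)$, is positive, and has operator norm at most $1$, and the same therefore holds for each finite product. Since $E$ is compact, uniform convergence preserves continuity, non-negativity, and the bound $\|\cdot\|_\infty \le \|f\|_\infty$, so $U(t,s)f \in C(E)$, with $U(t,s)f \ge 0$ whenever $f \ge 0$; specialising to $0 \le f \le 1$ yields $0 \le U(t,s)f \le 1$. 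The evolution identity (property 2), $U(t,r) = U(t,s)U(s,r)$ and $U(t,t) = I$, is built into the evolution family furnished by Theorem \ref{fattoriniTheorem}, and requires no additional argument.

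For the joint continuity (property 4), I would combine uniform operator continuity with pointwise continuity of the image. Fix $f \in C(E)$. By Remark \ref{solutionProperties}, $(s,t) \mapsto U(t,s)$ is continuous in the uniform operator topology on $\Lambda$, so $(s,t) \mapsto U(t,s)f$ is continuous into $(C(E), \|\cdot\|_\infty)$. Given $(s_n, t_n, x_n) \to (s_0, t_0, x_0)$ in $\Lambda$, the triangle inequality
\[
|U(t_n,s_n)f(x_n) - U(t_0,s_0)f(x_0)| \le \|U(t_n,s_n)f - U(t_0,s_0)f\|_\infty + |U(t_0,s_0)f(x_n) - U(t_0,s_0)f(x_0)|
\]
bounds the first term by uniform operator continuity and the second by continuity of $U(t_0,s_0)f \in C(E)$ (established in property 1).

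I do not anticipate a hard conceptual obstacle here; the only point requiring care is ensuring that Trotter convergence is taken in the $C(E)$ topology so that positivity, contractivity, and continuity all pass to the limit in a single stroke. This is exactly Proposition \ref{uniformConvergence} applied with $X = C(E)$, and the four conditions of Definition \ref{FellerEvolution} then follow as sketched.
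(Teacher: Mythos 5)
Your proposal is correct and follows essentially the same route as the paper's proof: the Trotter product approximation (Proposition \ref{uniformConvergence}) combined with positivity and contractivity of each factor semigroup yields property \emph{3}, and the triangle inequality together with the uniform operator continuity of $(s,t)\mapsto U(t,s)$ from Remark \ref{solutionProperties} yields property \emph{4}. The only cosmetic difference is that you derive property \emph{1} explicitly from the Trotter limit, whereas the paper dismisses properties \emph{1} and \emph{2} as straightforward (the evolution family already acts on $C(E)$ by construction).
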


\textit{Proof.} 
We verify the properties of Definition \ref{FellerEvolution}.
The verification of \emph{1}.\ and \emph{2}.\ is straightforward.
We now prove condition \emph{3}. Let $f\in C(E)$ such that $0\leq f \leq 1$. By Proposition \ref{uniformConvergence}, 
we have 
\[
\lim_{n\rightarrow \infty}\prod_{k=1}^{n}e^{\frac{(t-s)}{n} A(s+k\frac{(t-s)}{n})}f(x)=U(t,s)f(x)
\]
in the supremum norm, and with $0\leq s \leq t \leq T$. Therefore, we also have point-wise convergence. Since every semigroup $\{e^{tA(\tau)}\}_{t\geq 0}$ is a strongly continuous, positive, contraction semigroup we have 
\[
0\leq \prod_{k=1}^n e^{\frac{(t-s)}{n} A(s+k\frac{(t-s)}{n})}f(x)\leq 1.
\]
We obtain the desire result by taking $n\rightarrow \infty$. For condition \emph{4}., note that, since the space $E$ is Polish, we can prove the continuity in terms of sequences. Let $(t,s,x)\in \Lambda$, with $s\leq t$ and $x\in E$. And let $(t_n,s_n,x_n)$ a convergent sequence, with $s_n\leq t_n$ and $x_n\in E$, such that $(t_n,s_n,x_n)\rightarrow(t,s,x)$ as $n\rightarrow \infty$. We have

\begin{equation*}
    \begin{split}
      |U(t_n,s_n)f(x_n)-U(t,s)f(x)|&\leq |U(t_n,s_n)f(x_n)-U(t,s)f(x_n)|\\&+|U(t,s)f(x_n)-U(t,s)f(x) |  \\
      \leq &||U(t_n,s_n)-U(t,s)||_{\infty}||f||_{\infty}\\ &+|U(t,s)f(x_n)-U(t,s)f(x) |
    \end{split}
\end{equation*}
Since $(t,s)\mapsto U(t,s)$ is continuous in the uniform norm topology (Remark 
\ref{solutionProperties}.\emph{2}.), 
and the function $x\mapsto U(t,s)f(x)$ is continuous in $E$, by taking $n\rightarrow \infty$ the right hand side of the above inequality tends to zero. This now proves that $U(t,s)$ is a Feller evolution. \qed
\begin{teo}\label{contractionSG}
Let $\{A(t)\}_{t\geq 0}$ be a set of bounded operators on $C(E)$. Suppose that $t\mapsto A(t)$ is continuous in the uniform norm topology and that each $A(t)$ generates a strongly continuous, positive, contraction semi-group. Then the Cauchy problem (\ref{nACP_stoch})
is well-posed and its evolution family $U(t,s)$ generates a strong Markov process with the properties listed in Theorem \ref{nAHunt}. 
\end{teo}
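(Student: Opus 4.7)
The plan is to recognise that this theorem is essentially a synthesis result: it combines Theorem \ref{fattoriniTheorem}, Lemma \ref{Feller}, and Theorem \ref{nAHunt}, so the proof reduces to checking that the hypotheses of each feed correctly into the next. First I would establish well-posedness of (\ref{nACP_stoch}). Since $t\mapsto A(t)$ is continuous in the uniform operator topology, it is a fortiori strongly continuous, and each $A(t)$ is bounded by assumption; hence Theorem \ref{fattoriniTheorem} applies and yields, for every initial datum $u_s\in C(E)$, a unique solution $u(\cdot,t)=U(t,s)u_s$. The uniform continuity of $t\mapsto A(t)$ additionally ensures, via Remark \ref{solutionProperties}, that $(t,s)\mapsto U(t,s)$ is continuous in the uniform operator topology on $\{s\le t\}$, a fact needed later.

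Next I would verify that $U(t,s)$ is a Feller evolution on $C(E)$ by directly invoking Lemma \ref{Feller}. Its hypotheses are precisely that each $A(\tau)$ generates a strongly continuous, positive contraction semigroup on $C(E)$ and that $t\mapsto A(t)$ is continuous in the uniform operator topology, which are both assumed here. Therefore Lemma \ref{Feller} gives that $\{U(t,s):0\le s\le t\le T\}$ satisfies the four conditions of Definition \ref{FellerEvolution}: it leaves $C(E)$ invariant, it is an evolution, it preserves the order interval $[0,1]$, and $(s,t,x)\mapsto U(t,s)f(x)$ is jointly continuous on $\Lambda$.

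Having identified $U(t,s)$ as a Feller evolution, the conclusion follows from Theorem \ref{nAHunt}: there is a strong Markov process (in fact a Hunt process) $\{X(t)\}$ with a family of measures $\mathbb{P}_{\tau,x}$ such that
\[
[U(t,\tau)f](x)=\mathbb{E}_{\tau,x}[f(X(t))],\qquad f\in C(E),\ \tau\le t,
\]
and this process enjoys all the properties listed in \cite[Thm.\ 2.9]{vanCasteren2011}, which is the content of Theorem \ref{nAHunt}.

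The only genuine obstacle is bookkeeping: one must be sure that the hypotheses of the cited results are satisfied in the strict form required, in particular that \emph{uniform} operator continuity of $t\mapsto A(t)$ implies the \emph{strong} continuity needed by Theorem \ref{fattoriniTheorem} and by Lemma \ref{Feller}, and that the positivity/contractivity of each $\{e^{tA(\tau)}\}_{t\ge0}$ is what allows the Trotter-product argument in Lemma \ref{Feller} to transfer the bounds $0\le\,\cdot\,\le1$ to the limit $U(t,s)$. Once these points are checked, the chain Theorem \ref{fattoriniTheorem} $\Rightarrow$ Lemma \ref{Feller} $\Rightarrow$ Theorem \ref{nAHunt} closes the argument with no further work.
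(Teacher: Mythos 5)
Your proposal is correct and follows essentially the same route as the paper: the paper's proof is precisely the chain you describe, with well-posedness via Theorem \ref{fattoriniTheorem} handled in the discussion preceding Lemma \ref{Feller}, and the theorem itself deduced as a direct consequence of Lemma \ref{Feller} and Theorem \ref{nAHunt}. Your additional bookkeeping (uniform operator continuity implying strong continuity, and the role of positivity/contractivity in the Trotter-product argument) only makes explicit what the paper leaves implicit.
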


\textit{Proof.} This is a consequence of Lemma \ref{Feller}
and Theorem \ref{nAHunt}. 
\qed

\begin{rem}
The literature does not seem to contain a result proving that a non-autonomous Cauchy problem has attached to it a Markov process. That is why  the above result was obtained using the techniques learned above in order to find an aproximate solution (Proposition \ref{uniformConvergence}). 
In particular, it is important to note that evolution families are the two-parameter generalizations of semigroups, and analogously, Feller evolution families are the two parameter generalizations of Feller semigroups. 
\end{rem}

\begin{teo}\label{Markov}
There exists a probabilty transition function $P(t,x;s,\cdot)$, where $(t,x,s)\in [0,T]\times K_N \times [0,T] $, and $s\leq t$, on the Borel $\sigma$-algebra of $K_N$, such that the Cauchy problem (\ref{nAsyst}) 
has a unique solution of the form 
\[
u(x,t)=\int_{K_N} u_s(x) P(t,x;s,dy).
\]
In addition, $P(t,x;s,\cdot)$ is the transition function of a Markov process with the properties listed in Theorem \ref{nAHunt}. 
\end{teo}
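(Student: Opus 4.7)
The plan is to apply Theorem~\ref{contractionSG} on the Banach space $C(K_N)$, and then invoke Theorem~\ref{nAHunt} to extract the probability transition function, thereby recasting the solution operator as the desired integral. Since $K_N$ is a finite disjoint union of compact $p$-adic balls, it is a compact Polish space and $C(K_N)=C_b(K_N)$, so the abstract framework of Theorem~\ref{contractionSG} is available. I would carry the graph-theoretic operator $\mathbb{L}(t)$ of Section~\ref{nonAutoEq} across from $L^2(K_N,\mathbb{C})$ to $C(K_N)$ and verify the three hypotheses of that theorem.

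First I would check that $\mathbb{L}(t)$ maps $C(K_N)$ boundedly into itself. Writing
\[
\mathbb{L}(t)u(x)=p^{N}\sum_{I,J\in V(\mathcal{G})}A_{IJ}(t)\langle u,\Omega_{J,N}\rangle\Omega_{I,N}(x)-u(x)\gamma(t)(x),
\]
both summands are continuous on $K_N$, since the first is a linear combination of indicator functions of the pairwise disjoint clopen vertex balls, and the second is the pointwise product of $u\in C(K_N)$ with the locally constant function $\gamma(t)(\cdot)$. Repeating the estimates used in the proof of Theorem~\ref{stronglyContinuous} with $\norm{\cdot}_\infty$ in place of $\norm{\cdot}_{L^2}$ then yields
\[
\norm{\mathbb{L}(t)u-\mathbb{L}(s)u}_\infty\le\varepsilon\bigl(\Vol(K_N)+1\bigr)\norm{u}_\infty,
\]
so that $t\mapsto\mathbb{L}(t)$ is continuous in the uniform operator topology on $C(K_N)$.

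Next I would verify that, under the natural assumption $A_{IJ}(t)\ge 0$ for a weighted adjacency matrix, each $\mathbb{L}(t)$ generates a strongly continuous positive contraction semigroup on $C(K_N)$. Using the jump-kernel rewrite $\mathbb{L}(t)u(x)=\int_{K_N}A(x,y,t)(u(y)-u(x))\,dy$, if $u\in C(K_N)$ attains a non-negative maximum at $x_0$, then $u(y)-u(x_0)\le 0$ for all $y$ and the integrand is non-positive, so $\mathbb{L}(t)u(x_0)\le 0$: the positive maximum principle holds. Combined with $\mathbb{L}(t)\mathbf{1}=0$, and the fact that $\{e^{s\mathbb{L}(t)}\}_{s\ge 0}$ is norm-continuous because $\mathbb{L}(t)$ is bounded, this classically (Hille--Yosida--Ray) yields a positive contraction semigroup on $C(K_N)$.

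With all hypotheses of Theorem~\ref{contractionSG} verified, the Cauchy problem (\ref{nAsyst}) is well-posed, and its evolution family $U(t,s)$ is a Feller evolution. Theorem~\ref{nAHunt} then furnishes a Hunt process with sub-probability transition function $P(t,x;s,\cdot)$ on the Borel $\sigma$-algebra of $K_N$ satisfying
\[
U(t,s)f(x)=\int_{K_N}f(y)\,P(t,x;s,dy)\qquad(f\in C(K_N)),
\]
and taking $f=u_s$ recovers the integral representation in the statement (the integrand in the displayed formula should read $u_s(y)$); uniqueness is inherited from the well-posedness of (\ref{nACP_stoch}). The main obstacle is the positivity/contraction verification on $C(K_N)$, which was unnecessary in the $L^2$-theory developed earlier; the argument above reduces it to the positive maximum principle once the jump-kernel form of $\mathbb{L}(t)$ is made explicit, together with the harmless non-negativity assumption on the edge weights.
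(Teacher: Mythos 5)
Your proposal is correct and follows the same overall skeleton as the paper's proof: verify the hypotheses of Theorem~\ref{contractionSG} for $t\mapsto\mathbb{L}(t)$ on $C(K_N)$, establish continuity in the uniform operator topology by rerunning the estimates from Theorem~\ref{stronglyContinuous} in the $\norm{\cdot}_\infty$ norm (you arrive at exactly the paper's bound $\varepsilon(\Vol(K_N)+1)$), and then invoke Theorem~\ref{contractionSG} together with Theorem~\ref{nAHunt} to obtain the Hunt process and the transition-function representation of the solution. The one genuine divergence is how the remaining hypothesis---that each $\mathbb{L}(t)$ generates a strongly continuous, positive, contraction semigroup on $C(K_N)$---is discharged: the paper simply cites Lemma~4.1 of Z\'u\~niga-Galindo's work, whereas you prove it from scratch by writing $\mathbb{L}(t)$ in jump-kernel form $\int_{K_N}A(x,y,t)(u(y)-u(x))\,dy$, checking the positive maximum principle and $\mathbb{L}(t)\mathbf{1}=0$, and appealing to Hille--Yosida--Ray. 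Your route is more self-contained and has the virtue of making explicit the hypothesis $A_{IJ}(t)\ge0$ on the edge weights, which the paper never states but which is silently required both for the cited lemma and for any Markovian interpretation; the paper's route is shorter but leaves that assumption implicit. You also correctly note that the displayed formula in the statement should read $u_s(y)$ rather than $u_s(x)$ under the integral, a typo the paper's own proof does not address.
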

\textit{Proof.} We have to proove that $t\mapsto \mathbb{L}(t)$ satisfies the hypothesis in Theorem \ref{contractionSG}. 
By   \cite[Lem.\ 4.1]{ZunigaNetworks}, we know that $\mathbb{L}(t)$ generates a strongly continuous, positive, contraction semigroup on $C(K_N)$ for each $t\ge0$. hence, we only have to show that $t\mapsto \mathbb{L}(t)$ is continuous in the uniform norm topology. Recall from the proof of Theorem
\ref{stronglyContinuous} that 
\[
||\mathbb{L}(t)u-\mathbb{L}(s)u||_{\infty}\leq ||\mathcal{A}(t)u-\mathcal{A}(s)u||_{\infty}+||\Gamma(t)u-\Gamma(s)u||_{\infty}.
\]
Let $\varepsilon>0$, and $\delta>0$ such that 
\[
|t-s|<\delta \implies|A(x,y,t)-A(x,y,s)|<\varepsilon \ and  \ |\gamma(t)(x)-\gamma(s)(x)|< \varepsilon
\]
We have that
\begin{equation}
    \begin{split}
        ||\mathcal{A}(t)u-\mathcal{A}(s)u||_{\infty}&=\sup_{x\in K_N}\left|\int_{K_N} (A(x,y,t)-A(x,y,s))u(y)dy\right|\\ 
        &\leq \varepsilon||u||_{\infty}\Vol(K_N)
    \end{split}
\end{equation}
and 
\[
||\Gamma(t)u-\Gamma(s)u||_{\infty} =\sup_{x\in K_N}|(\gamma(t)(x)-\gamma(s)(x))||u(x)|\leq \varepsilon ||u||_{\infty}\,.
\]
If $||u||_{\infty}\leq 1$, then
\[
||\mathbb{L}(t)u-\mathbb{L}(s)u||_{\infty}\leq \varepsilon(\Vol(K_N)+1).
\]
This inequality implies the continuity in the uniform norm topology. We now apply Theorem \ref{nAHunt} and Theorem \ref{contractionSG} to obtain the result. \qed

%%%%%%%%%%%%%%%
\section*{Acknowledgements}

The first named author is indebted to Norbert Paul for coming up  with questions on how to study topological questions in the setting of finite spaces a long time ago, which gave a long-lasting impulse to eventually combine these with ultrametric analysis and mathematical physics.
The second named author is indebted to
Wilson Z\'u\~niga-Galindo for sharing valuable insights into $p$-adic analysis when he was under his supervision as an undergraduate student. Martin Breunig, Markus Jahn and David Weisbart are warmly thanked for fruitful discussions.
%Patrick Erik Bradley is warmly thanked for giving him an opportunity to continue research in $p$-adic analysis and applications, helpful suggestions, and for reading through this article.
This work is supported by the Deutsche Forschungsgemeinschaft
under project number 469999674.

%%%%%%%%%%%%%%%%
\bibliographystyle{plain}
\bibliography{biblio}

\end{document}